\DeclareMathOperator*{\argmin}{argmin}
\DeclarePairedDelimiter\floor{\lfloor}{\rfloor}
\newcommand{\bq}{\begin{equation}}
\newcommand{\eq}{\end{equation}}
\newcommand{\R}{\mathbb{R}}
\newcommand{\abs}[1]{\left\vert#1\right\vert}
\newcommand{\G}{\mathcal{G}}
\newcommand{\bO}{\mathcal{O}}
\newcommand{\dist}{\text{dist}}
\newcommand{\Dt}{\mathcal{D}}
\newcommand{\MA}{Monge-Amp\`ere\xspace}
\newcommand{\uex}{u_{\text{ex}}}
\newcommand{\cex}{c_{\text{ex}}}
\newcommand{\ulin}{\tilde{u}}
\newcommand{\itan}[1]{\tan^{-1}\left({#1}\right)}
\algnewcommand{\LineComment}[1]{\State \(\triangleright\) #1}
\newtheorem{theorem}{Theorem}
\theoremstyle{lemma}
\newtheorem{lemma}[theorem]{Lemma}
\newtheorem{lem}[theorem]{Lemma}
\newtheorem{definition}[theorem]{Definition}
\newtheorem{defn}[theorem]{Definition}
\newtheorem{remark}[theorem]{Remark}
\theoremstyle{remark}
\newcommand\appendix@section[1]{%
\refstepcounter{section}%
\orig@section*{Appendix \@Alph\c@section: #1}%
%\addcontentsline{toc}{section}{Appendix \@Alph\c@section: #1}%
}
\let\orig@section\section
\g@addto@macro\appendix{\let\section\appendix@section}
\begin{document}

\title[Finite difference method for minimal Lagrangian graphs]{A convergent finite difference method for computing minimal Lagrangian graphs}

\author{Brittany Froese Hamfeldt}
\address{Department of Mathematical Sciences, New Jersey Institute of Technology, University Heights, Newark, NJ 07102}
\email{bdfroese@njit.edu}
\author{Jacob Lesniewski}
\address{Department of Mathematical Sciences, New Jersey Institute of Technology, University Heights, Newark, NJ 07102}
\email{jl779@njit.edu}

\thanks{The first author was partially supported by NSF DMS-1619807 and NSF DMS-1751996. The second author was partially supported by NSF DMS-1619807.}

\begin{abstract}
We consider the numerical construction of minimal Lagrangian graphs, which is related to recent applications in materials science, molecular engineering, and theoretical physics.  It is known that this problem can be formulated as an additive eigenvalue problem for a fully nonlinear elliptic partial differential equation.  We introduce and implement a two-step generalized finite difference method, which we prove converges to the solution of the eigenvalue problem.  Numerical experiments validate this approach in a range of challenging settings.  We further discuss the generalization of this new framework to Monge-Amp\`ere type equations arising in optimal transport.  This approach holds great promise for applications where the data does not naturally satisfy the mass balance condition, and for the design of numerical methods with improved stability properties.
\end{abstract}

\date{\today}    
\maketitle

We consider the problem of constructing a diffeomorphism $f:X\to Y$ such that the graph
\bq\label{eq:manifold}
\Sigma = \{(x,f(x)) \mid x\in X\}
\eq
is a Lagrangian submanifold of $\R^n\times\R^n$ with minimal area (or equivalently, having zero mean curvature).  Here $X, Y\subset\R^n$ are smooth, convex, and bounded.  The problem of constructing minimal surfaces is important in applications such as materials science~\cite{Hoffman} and molecular engineering~\cite{Bates}.  There has also been recent interest in the use of mean curvature flows to generate minimal Lagrangian submanifolds of Calabi-Yau manifolds~\cite{Smoczyk,ThomasYau}.

Here we are interested in $\R^n\times\R^n$ equipped with the symplectic form
\bq\label{eq:2form}
\omega = \sum\limits_{i=1}^n dx_i \wedge dy_i
\eq
where coordinates in $\R^n\times\R^n$ are given by $(x_1, \ldots, x_n, y_1, \ldots, y_n)$.  A submanifold $L$ is said to be Lagrangian if 
$\omega\vert_\Sigma=0$, which is equivalent to the condition that $f$ can be expressed as a gradient: $f = \nabla u$~\cite{HarveyLawson}.

In order to compute a minimal Lagrangian submanifold, we can equivalently seek a submanifold whose Lagrangian angle (which is a primitive of mean curvature) is constant.  This can be expressed as the following eigenvalue (or additive eigenvalue) problem for a nonlinear elliptic PDE,
\bq\label{eq:LagrangianPDE}
-\sum\limits_{i=1}^n \arctan(\lambda_i(D^2u(x))) +c = 0, \quad x \in X
\eq
where $\lambda_i(D^2u)$ denote the eigenvalues of the Hessian of $u$ and the constant $c$ is not known \emph{a priori}.  This is augmented by the so-called second type boundary condition
\bq\label{eq:BVP2}
\nabla u(X) = Y.
\eq
A result by Brendle and Warren~\cite{brendle2010} showed that as long as $X$ and $Y$ are uniformly convex,~\eqref{eq:LagrangianPDE}-\eqref{eq:BVP2} has a unique (up to additive constants) convex solution~$u\in C^2(X)$ with associated Lagrangian angle $c\in(0,\frac{n\pi}{2})$.

Equation~\eqref{eq:LagrangianPDE} is an example of a fully nonlinear elliptic partial differential equation (PDE).  The past few years have seen a rising interest in numerical techniques for solving fully nonlinear elliptic PDEs, with several new approaches being introduced including~\cite{BrennerNeilanMA2D,DGnum2006,FengNeilan,FroeseMeshfreeEigs,ObermanEigenvalues}.  The unusual boundary condition~\eqref{eq:BVP2} has also received recent attention because of its relationship to optimal transport~\cite{Benamou:2014:NSO:2574571.2574621,Prins_BVP2}.  

The PDE for minimal Lagrangian submanifolds is unique, however, in that it involves an additional unknown constant $c$.  In fact, \MA equations and other Generated Jacobian Equations related to Optimal Transport may also be expressed this way, as hinted at in~\cite{FroeseTransport}.  There are distinct advantages to using this formulation in applications where data does not naturally satisfy the mass balance condition (e.g., image registration~\cite{Haker}, seismic full waveform inversion~\cite{EF_FWI}, mesh generation~\cite{BuddMeshGen}) and in problems where consistent discretizations fail to inherit the well-posedness of the underlying PDE.

In this article, we develop a framework for numerically solving eigenvalue problems of the form
\bq\label{eq:EigProblem}
F(x, u(x), \nabla u(x), D^2u(x)) + cf(x,u(x), \nabla u(x)) = 0
\eq
where $F$ is an elliptic operator and the PDE is coupled to a second type boundary condition~\eqref{eq:BVP2}. In particular, we use this framework to introduce and implement a numerical method for computing minimal Lagrangian submanifolds.  The method utilizes generalized finite difference approximations on augmented piecewise-Cartesian grids, as introduced in~\cite{HS_Quadtree}.  We adapt this to the minimal Lagrangian problem, and show how this approach can be used to enforce the second boundary condition~\eqref{eq:BVP2} and numerically compute the eigenvalue $c$. Though the method is implemented in two dimensions, it could be easily adapted to higher dimensions and more complicated PDEs.

We prove that our method converges to the solution of the nonlinear eigenvalue problem~\eqref{eq:LagrangianPDE}-\eqref{eq:BVP2}. Ultimately, the techniques developed and analyzed for the minimal Lagrangian problem hold great promise for the solution of other more challenging PDEs related to Optimal Transport.

\section{Background}\label{sec:background}

\subsection{Elliptic equations}
A PDE 
\bq\label{eq:PDE} 
G(x,u,\nabla u,D^2 u) = 0 
\eq
is fully nonlinear and elliptic if it exhibits nonlinear dependence on the highest order derivative, and satisfies the ellipticity condition: 
\begin{defn}[elliptic operator]The PDE~\eqref{eq:PDE} is \textbf{(degenerate) elliptic} if $$G(x,r,p,A) \leq G(x,s,p,B)$$ for all $x \in \bar{X}$, $r,s \in \R$, $p\in\R^n$, $A,B \in S^n$ with $A \geq B$ and $r \leq s$ where $A \geq B$ means $A - B$ is a positive definite matrix and $S^n$ is the set of symmetric $n\times n$ matrices. \label{def:ELC}
\end{defn} 

We remark that the PDE operator is not required to be continuous in space.  In particular, this allows us to incorporate boundary conditions directly into the operator $G$.  In the present article, we will be particularly interested in problems where the boundary operator takes the form
\bq\label{eq:Gbdy}
G(x,u,\nabla u,D^2u) \equiv H(x,\nabla u), \quad x\in\partial X
\eq
and the boundary operator $H(x,\nabla u)$ can be written in terms of one-sided directional derivatives.

A desirable property that is shared by many elliptic operators is the comparison principle.
\begin{defn}[comparison principle]\label{def:comparison}
The PDE operator~\eqref{eq:PDE} satisfies a \textbf{comparison principle} if whenever $G(x,u(x),\nabla u(x),D^2u(x)) \leq G(x,v(x),\nabla v(x),D^2v(x))$ for all $x\in \bar{\Omega}$ then $u(x) \leq v(x)$ for all $x \in \bar{X}$.
\end{defn}

A comparison principle can be used to establish existence and uniqueness of solutions to the PDE.  A common technique for proving existence is Perron's method, which involves arguing that the maximal subsolution
\bq\label{eq:maxSub}
u(x) \equiv \sup\left\{v(x) \mid G(x,v(x),\nabla v(x),D^2v(x)) \leq 0\right\}
\eq
is actually a solution to the PDE.  Uniqueness of solutions follows immediately from a comparison principle.

Many fully nonlinear elliptic equations do not possess a classical solution, and thus some notion of weak solution is needed.  A powerful approach is the viscosity solution, which relies on a maximum principle argument to transfer derivatives onto smooth test functions~\cite{CIL}.

In order to define viscosity solutions, we first must define the upper and lower semicontinuous envelopes.
\begin{defn}[upper and lower semicontinuous envelopes]\label{def:envelopes} The \textbf{upper and lower semicontinuous envelopes} of a function u(x) are defined by
$$u^*(x) = \limsup_{y\to x} u(y),$$
and
$$u_*(x) = \liminf_{y \to x} u(y)$$  respectively. 
\end{defn}

\begin{defn}[viscosity solution]\label{def:viscosity}
An upper(lower) semicontinuous function $u$ is a \textbf{viscosity sub(super)solution} of~\eqref{eq:PDE} if for any $x \in \Omega$ and any $\phi \in C^{2}(\Omega)$ such that $u - \phi$ attains a local maximum(minimum) at $x$, 
\[G_*^{(*)}(x,u(x),D\phi(x),D^2 \phi(x)) \leq(\geq) 0.\]
A continuous function is a \textbf{viscosity solution} of~\eqref{eq:PDE} if it is both a viscosity sub- and supersolution.
\end{defn}

Viscosity solutions provide a framework which allows many comparison, uniqueness, existence, and continuous dependence theorems to be proved. An equation can be shown to have a unique viscosity solution if it has a comparison principle~\cite{CIL}.

\subsection{Eigenvalue problem for a PDE}\label{sec:eig}
The equation~\eqref{eq:LagrangianPDE}-\eqref{eq:BVP2} we consider in this article is an example of an eigenvalue problem for a fully nonlinear elliptic operator.  Abstractly, the problem statement is to find $u \in C^2(X)\cap C^1(\bar{X})$ and $c\in\R$ such that
\bq\label{eq:eigProblemAbstract}
\begin{cases}
F(x,\nabla u(x),D^2u(x)) + cf(x,\nabla u(x)) = 0, & x \in X\\
H(x,\nabla u(x)) = 0, & x \in \partial X.
\end{cases}
\eq
We remark that the solution is at best unique only up to additive constants.

In fact, this formulation of the problem is intricately connected to the solvability of a related PDE.  As an example, we consider the Neumann problem for Poisson's equation.
\bq\label{eq:poisson}
\begin{cases}
-\Delta u + f = 0, & x\in \Omega\\
\frac{\partial u}{\partial n} = g , & x\in\partial \Omega
\end{cases}
\eq

For a solution to exist, data must satisfy the solvability condition
$$\int_\Omega f({x}) \,dx = \int_{\partial \Omega} g({x})\,dS\text{.}$$
However, data $f, g$ arising in applications are susceptible to noise, measurement error, etc. This can lead to a failure in the solvability condition. One approach to ensuring solvability in this case is to relax the problem and interpret it as an (additive) eigenvalue problem by introducing a constant $c$ and solving 
\bq\label{eq:poissonEig}
\begin{cases}
-\Delta u + c f = 0, & x\in \Omega\\
\frac{\partial u}{\partial n} = g, & x \in \partial\Omega.
\end{cases}
\eq
for the unknown pair $(u,c)$.

The new solvability condition is 
\bq\label{eq:solvePoisson} c\int_\Omega f(x) \,dx = \int_{\partial \Omega} g({x})\,dS\text{.}\eq
The solution of the eigenvalue problem~\eqref{eq:poissonEig} will then select a value of $c$ that satisfies this condition and forces the problem to be solvable.  If $f$ and $g$ are close to satisfying the solvability condition, then the solution will choose $c\approx 1$ and produce a solution to a PDE close to the original~\eqref{eq:poisson}, with the error due to errors in the input data $f, g$.

A similar issue arises in the solution of the second boundary value problem for the \MA equation, which arises in the context of optimal transport.
\bq\label{eq:MA}
\begin{cases}
-g(\nabla u(x))\det(D^2u(x)) + f(x) = 0, \quad x \in X\\
u \text{ is convex}\\
\nabla u(X) = Y.
\end{cases}
\eq
This problem has a solution only if the following mass balance condition is satisfied,
\bq\label{eq:massBalance}
\int_X f(x)\,dx = \int_Y g(y)\,dy.
\eq
However, in many applications (e.g., image processing~\cite{Haker}, seismic full waveform inversion~\cite{EF_FWI}, mesh generation~\cite{BuddMeshGen}, etc.) the data is not expected to naturally satisfy the solvability condition.  A proposed solution is to view the equation as an eigenvalue problem and seek a pair $(u,c)$ satisfying
\bq\label{eq:MAEig}
\begin{cases}
-g(\nabla u(x))\det(D^2u(x)) + cf(x) = 0, \quad x \in X\\
u \text{ is convex}\\
\nabla u(X) = Y.
\end{cases}
\eq

In fact, even when data does satisfy the relevant solvability condition, consistent discretizations of~\eqref{eq:poisson} or~\eqref{eq:MA} cannot be expected to inherit this solvability.
To illustrate this, consider Poisson's equation in one dimension with Neumann boundary conditions:
$$
\begin{cases}
-u''(x) + f(x) = 0  & x \in (0,1) \\
u'(x) = g(x)  & x = 0,1
\end{cases}
$$
where
\[f(x) \equiv \cos{\left(\frac{\pi}{2}x\right)}, \quad g(x) \equiv \frac{2}{\pi}\sin{\left(\frac{\pi}{2} x\right)}  \]
This has a solution, which is unique up to additive constants, since the data satisfies the solvability condition
$$\int_{0}^{1} f(x) dx = g(1) - g(0).$$ 

Now consider the uniform grid $x_j = jh$, $j= 0, \ldots, N$ and discretize the equation using standard centered differences for the second derivative and a one-sided difference for the boundary condition.  It is not hard to check that the resulting linear system has a solution only if the following discrete solvability condition is satisfied~\cite{LeVeque:2007:FDM:1355322}:
\[ h\sum\limits_{j=1}^{N-1}f(x_j) = g(x_N)-g(x_0). \]
This is a natural discrete analogue of the continuous solvability condition, but it is not exactly satisfied at the discrete level and the discrete problem thus fails to have a solution.

As an alternative, we view the Poisson equation as the following eigenvalue problem.
$$
\begin{cases}
-u''(x) + c f(x) = 0  & x \in (0,1) \\
u'(x) = g(x)  & x = 0,1
\end{cases}
$$

We discretize as before, including the eigenvalue $c$ as an additional unknown, and supplementing the linear system with an additional equation $u(x_0) = 0$ in order to select a unique solution.  This time, the discrete problem has a solution $u^h$ with corresponding eigenvalue $c^h$.  We verify that both $u^h\to u$ and $c^h\to1$ as the grid is refined, so that the limiting problem is the original Poisson equation.  See Figure~\ref{fig:poissonEx}.

\begin{figure}
\centering
\includegraphics[width=0.5\textwidth]{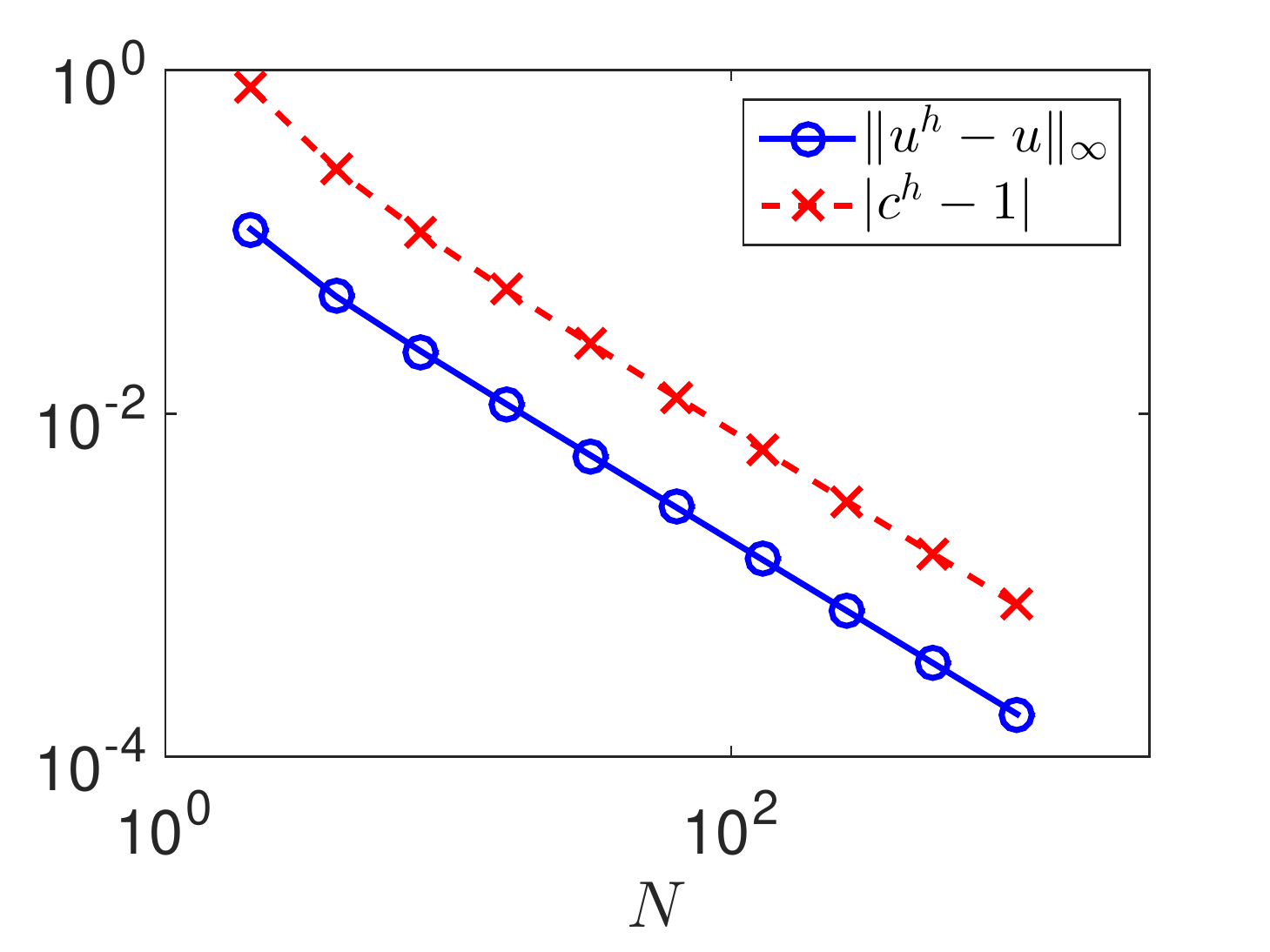}
\caption{Discrete solution to Poisson's equation when viewed as an eigenvalue problem.}
\label{fig:poissonEx}
\end{figure}

There is certainly a need for numerical methods and convergence analysis that can be applied to eigenvalue problems for fully nonlinear elliptic equations.  In addressing this issue for the construction of minimal Lagrangian graphs, we also begin the development of a framework for solving many other important nonlinear PDEs.

\subsection{Second boundary value problem}\label{sec:bvp2}

The unusual second type boundary condition~\eqref{eq:BVP2} does not at first glance appear to be a boundary condition at all.  However, when $u$ is a convex function (which is the case for our problem~\cite{brendle2010}) and $X, Y$ are uniformly convex sets, it can be recast as a nonlinear Neumann type boundary condition.  To do this, we require a defining function for the target set $Y$~\cite{Delanoe,Urbas_BVP2}, which should have the property that
\[
H(y) = \begin{cases} <0, & y \in Y\\ = 0, & y\in\partial Y\\>0, & y\notin\bar{Y}.\end{cases}
\]
A natural choice is the signed distance function to the target boundary $\partial Y.$

In this case, as in~\cite{Benamou:2014:NSO:2574571.2574621}, we can rewrite the boundary condition as
\bq\label{eq:HJBC}
H(\nabla u(x)) = 0, \quad x \in \partial X.
\eq
This simply requires that all points on the boundary of the domain $X$ are mapped (via the gradient of $u$) onto the boundary of the target set $Y$.

Then the problem of constructing minimal Lagrangian graphs can be recast as the following eigenvalue problem with nonlinear Neumann type boundary conditions.
\bq\label{eq:eigHJBC}
\begin{cases}
F(D^2u(x)) + c = 0, & x \in X\\
H(\nabla u(x)) = 0, & x \in \partial X
\end{cases}
\eq
where
\bq\label{eq:arctanOp}
F(D^2u(x)) = -\sum\limits_{i=1}^n\arctan(\lambda_i(D^2u(x)))
\eq
and
\bq\label{eq:HJOp}
H(\nabla u(x)) = \begin{cases} -\dist(\nabla u(x),\partial Y),&  \nabla u(x) \in Y\\
0, & \nabla u(x) \in \partial Y\\ \dist(\nabla u(x), \partial Y), & \nabla u(x) \notin\bar{Y}. \end{cases}
\eq

From the convexity of $Y$, the signed distance function to its boundary is also convex, and thus, $H$ is convex. We can rewrite $H$ in terms of supporting hyperplanes to the convex target set 
\begin{align}H(y) = \sup_{y_0\in\partial Y}\{n(y_0)\cdot (y - y_0)\}\end{align}
 where $n(y_0)$ is the outward normal to $\partial Y$ at $y_0$ \cite{Benamou:2014:NSO:2574571.2574621}. By duality, this is equivalent to 
\begin{align}H(y) = \sup_{\abs{n}=1}\{n\cdot (y - y_0(n)) \}\end{align}
 where $y_0(n)$ is the point on the boundary of $Y$ with the normal $n$. Then if $n$ is a unit outward normal to $Y$ at $y_0$, the Legendre-Fenchel transform of $H(y)$ is
\begin{align}H^*(n) =\sup_{y_0\in\partial Y} \{ n\cdot y_0 - H(y_0)\} = \sup_{y_0\in\partial Y} \{n\cdot y_0\} \end{align}
 and we can rewrite the condition as~\cite{Benamou:2014:NSO:2574571.2574621} 
\bq\label{eq:HJLF}H(y) = \sup_{\abs{n}=1}\{n\cdot y - H^*(n) \}. \eq
\begin{lemma}\label{lem:HJOpHyperplane}
Let $u\in C^2(\bar{X})$ be uniformly convex with $X$ and $Y$ convex. Then there exists $\ell > 0$ such that for all $x \in \partial X$
\bq\label{eq:bcpos} H(\nabla u(x)) = \max_{n \cdot n_x>\ell}\{\nabla u \cdot n - H^*(n)\} \eq
\end{lemma}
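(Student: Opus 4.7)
The plan is to locate the maximizer $n^*$ in the Legendre--Fenchel representation \eqref{eq:HJLF} at $y = \nabla u(x)$ and to show $n^*\cdot n_x$ is bounded below by a positive constant independent of $x \in \partial X$. Since the unrestricted supremum is attained at such an $n^*$, replacing $\{|n|=1\}$ by $\{n\cdot n_x > \ell\}$ preserves the supremum and establishes \eqref{eq:bcpos}.

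First, I would use the second boundary condition $\nabla u(X) = Y$, together with uniform convexity of $u$ on the compact set $\bar X$ and convexity of $Y$, to conclude that $\nabla u$ is a $C^1$ diffeomorphism from $\bar X$ onto $\bar Y$ sending $\partial X$ onto $\partial Y$. Fix $x \in \partial X$ and set $y = \nabla u(x) \in \partial Y$; then $H(y) = 0$ and the supremum in \eqref{eq:HJLF} is attained at $n^* = n(y)$, the outward unit normal to $\partial Y$ at $y$.

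Next I would establish the identity $D^2u(x)\,n^* = \alpha\, n_x$ with $\alpha > 0$. For any tangent vector $t \in T_x\partial X$, choose a curve $\gamma(s) \subset \partial X$ with $\gamma(0)=x$ and $\gamma'(0)=t$; since $\nabla u(\gamma(s)) \subset \partial Y$, differentiating at $s = 0$ gives $D^2u(x)\,t \in T_y\partial Y$, and hence $n^* \cdot D^2u(x)\,t = 0$ for all such $t$. Symmetry of the Hessian then forces $D^2u(x)\,n^*$ to be parallel to $n_x$. The sign of $\alpha$ is pinned down by Taylor expanding along the inward ray $x' = x - \ep n_x \in X$, using $\nabla u(x') \in Y$, which yields $\alpha \geq 0$; positive definiteness of $D^2u(x)$ excludes $\alpha = 0$.

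Finally, inverting gives $n^* = \alpha (D^2u(x))^{-1} n_x$, and the unit-norm condition fixes $\alpha$, yielding
$$n^*\cdot n_x = \frac{n_x^\top (D^2u(x))^{-1} n_x}{\sqrt{n_x^\top (D^2u(x))^{-2} n_x}}.$$
Uniform convexity and $C^2$ regularity on the compact set $\bar X$ supply constants $0 < \mu \leq M$ with $\mu I \leq D^2u(x) \leq MI$ for all $x \in \bar X$, whence $n^*\cdot n_x \geq \mu/M$ uniformly; any $\ell \in (0,\mu/M)$ then works. I expect the main subtlety to be the tangent-space pushforward identity in the third paragraph, which depends on $\nabla u$ mapping $\partial X$ \emph{onto} (not merely into) $\partial Y$, and therefore leans essentially on both the boundary condition and the convexity of $X$ and $Y$.
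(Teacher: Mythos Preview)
Your argument is correct and follows the same overall strategy as the paper: identify the maximizer in~\eqref{eq:HJLF} as $n^* = n_y$, the outward unit normal to $\partial Y$ at $y=\nabla u(x)$, and then show that $n_y\cdot n_x$ is bounded below uniformly on $\partial X$ using positive definiteness of $D^2u$. The execution differs, however. The paper simply quotes from~\cite{BFOnumSolSBVP} the identity
\[
n_x\cdot n_y = a(x)\,\nabla H(\nabla u(x))^{\!\top} D^2u(x)\,\nabla H(\nabla u(x)),
\]
notes it is positive because $D^2u(x)$ is positive definite, and then invokes continuity on the compact set $\partial X$ to extract a positive minimum $\ell$. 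You instead derive the underlying relation $D^2u(x)\,n_y = \alpha\,n_x$ from scratch via the tangent-space pushforward argument (which is essentially what lies behind the cited formula), and then produce the explicit quantitative bound $n_y\cdot n_x \geq \mu/M$ from the eigenvalue bounds on $D^2u$, bypassing the soft compactness step. Your route is more self-contained and yields a concrete constant; the paper's is shorter by outsourcing the geometric identity to a reference. Both rely implicitly on $\nabla u(\partial X)\subset\partial Y$, which, as you note, comes from the second boundary condition rather than from the hypotheses literally stated in the lemma.
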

From Section 2.3 of~\cite{BFOnumSolSBVP}, we know that
$$n_x \cdot n_y = a(x)\nabla H(\nabla u(x))^T D^2u(x) \nabla H(\nabla u(x)),$$
which is positive for all $x$ since $D^2u(x)$ is positive definite.  Moreover, this is continuous on the compact set $\partial X$. Thus, it has a minimum, which must also be a positive value $\ell$.
From the same section in \cite{BFOnumSolSBVP}, we know that the maximum in~\eqref{eq:bcpos} is attained when $n = n_y$. Since we know that $n_x \cdot n_y \geq \ell>0$, we can restrict the maximum to vectors $n$ satisfying this constraint.

\subsection{Discretization of elliptic PDEs}\label{sec:discBackground}
In order to build convergent methods for the eigenvalue problem~\eqref{eq:LagrangianPDE}, we wish to build upon recent developments in the approximation of fully nonlinear elliptic equations.

Classically, the convergence of numerical methods is established via the Lax-Equivalence Theorem.  Roughly speaking, a consistent, stable method will converge to the solution of the continuous equation.  However, this does not immediately yield convergent methods for fully nonlinear equations for a couple of reasons. First, establishing the existence and stability of solutions to a discrete method can be a delicate problem in the case of nonlinear equations and secondly, it does not apply when the equation does not have classical solutions.

A powerful contribution to the numerical approximation of elliptic equations was provided by the Barles-Souganidis framework, which states that the solution to a scheme that is consistent, monotone, and stable will converge to the viscosity solution, provided the underlying PDE satisfies a comparison principle~\cite{BSnum}.

In this article, we consider finite difference schemes that have the form
\bq\label{eq:approx} G^h(x,u(x),u(x)-u(\cdot)) = 0, \quad x\in \G^h \eq
where $u:\G^h\to\R$ is a grid function and $\G^h\subset\bar{X}$ is a finite set of discretization points, which can be a finite difference grid or a more general point cloud.  Here $h$ is a small parameter relating to the grid resolution. In particular, we expect that as $h\to0$, the domain becomes fully resolved in the sense that
\bq\label{eq:resolution} \lim\limits_{h\to0} \sup\limits_{y\in\Omega}\min\limits_{x\in\G^h}\abs{x-y} = 0. \eq

In this setting, the properties required by the Barles-Souganidis framework can be defined as follows.

\begin{definition}[Consistency]\label{def:consistency}
The scheme~\eqref{eq:approx} is \textbf{consistent} with the PDE operator~\eqref{eq:PDE}
 if for any smooth function $\phi$ and $x\in\bar{\Omega}$,
\[ \lim_{h\to0, y\in\G^h\to x} G^h(y,\phi(y),\phi(y)-\phi(\cdot)) = G(x,\phi(x),\nabla\phi(x),D^2\phi(x)). \]
\end{definition}

To consistent schemes, we also associate a truncation (consistency) error $\tau(h)$ .
\begin{definition}[Truncation error]\label{def:truncation}
The truncation error $\tau(h) > 0$ of the scheme~\eqref{eq:approx} is a quantity chosen so that for every smooth function $\phi$
\[ \limsup\limits_{h\to0}\max\limits_{x\in\G^h}\frac{\abs{G^h(x,\phi(x),\phi(x)-\phi(\cdot)) - G(x,\phi(x),\nabla\phi(x),D^2\phi(x))}}{\tau(h)} < \infty. \]
\end{definition}

%\begin{definition}[Truncation error]\label{def:truncation}
%Define $\tau(h)$ to be the maximum \textbf{truncation error} of the scheme on the exact solution $u_{\text{ex}}$ of~\eqref{eq:PDE}: $$\tau(h) = \max\limits_{x\in\G^h} |G(x,u_{ex}(x), \nabla u_{ex}(x),D^2\uex(x)) - G^h(x,\uex(x),\uex(x)-\uex(\cdot))|.$$
%\end{definition}

\begin{definition}[Monotonicity]\label{def:monotonicity}
The scheme~\eqref{eq:approx} is \textbf{monotone} if $G^h$ is a non-decreasing function of its final two arguments.
\end{definition}

\begin{definition}[Stability]\label{def:stability}
The scheme~\eqref{eq:approx} is \textbf{stable} if there exists a constant $M$, independent of $h$, such that if $h>0$ and $u^h$ is any solution of~\eqref{eq:approx} then $\|u^h\|_\infty \leq M$.
\end{definition}

\begin{definition}[Continuity]\label{def:continuous}
The scheme~\eqref{eq:approx} is \textbf{continuous} if $G^h$ is continuous in its last two arguments.
\end{definition}

The Barles-Souganidis convergence framework does not apply to all elliptic PDEs, including~\eqref{eq:eigHJBC}, which does not have the required comparison principle.  Nevertheless, it provides an important starting point for the development of convergent numerical methods.  

In particular, monotone schemes possess some form of a comparison principle even if the limiting PDE does not.  Under the additional assumption that $G^h$ is strictly increasing in its second argument, we can obtain a discrete comparison principle very similar to Definition~\ref{def:comparison}, which in turn yields uniqueness of solutions to the approximation scheme; see~\cite[Theorem~5]{ObermanEP}.  The schemes we consider in this work do not satisfy this traditional comparison principle.  Nevertheless, as noted in~\cite[Lemma~5.4]{Hamfeldt_Gauss}, they do obey a slightly weaker form of comparison principle.

\begin{lem}[Discrete comparison principle]\label{lem:discreteComp}
Let $G^h$ be a monotone scheme and $G^h(x,u(x),u(x)-u(\cdot)) < G^h(x,v(x),v(x)-v(\cdot))$ for every $x\in\G^h$.  Then $u(x) \leq v(x)$ for every $x\in\G^h$.
\end{lem}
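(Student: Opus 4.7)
The plan is a standard contradiction argument at a maximizer of $u - v$, with the strict inequality in the hypothesis compensating for the fact that monotonicity of $G^h$ is only non-strict.

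First I would assume for contradiction that $u(x_0) > v(x_0)$ for some $x_0 \in \G^h$. Since $\G^h$ is a finite set of grid points, the function $x \mapsto u(x) - v(x)$ attains its maximum, and I would pick $x^* \in \argmax_{x \in \G^h}(u(x) - v(x))$. By assumption this maximum value is strictly positive, so in particular $u(x^*) \geq v(x^*)$.

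Next I would translate the maximum property into a pointwise inequality on finite differences. For every $y \in \G^h$, we have
\[
u(x^*) - v(x^*) \geq u(y) - v(y),
\]
which rearranges to $u(x^*) - u(y) \geq v(x^*) - v(y)$. Read as grid functions of the ``$\cdot$'' argument, this is exactly $u(x^*) - u(\cdot) \geq v(x^*) - v(\cdot)$ pointwise on $\G^h$.

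Then I would apply monotonicity (Definition~\ref{def:monotonicity}) in two steps at $x^*$. Since $G^h$ is non-decreasing in its third argument,
\[
G^h(x^*, u(x^*), u(x^*) - u(\cdot)) \geq G^h(x^*, u(x^*), v(x^*) - v(\cdot)),
\]
and since $G^h$ is non-decreasing in its second argument and $u(x^*) \geq v(x^*)$,
\[
G^h(x^*, u(x^*), v(x^*) - v(\cdot)) \geq G^h(x^*, v(x^*), v(x^*) - v(\cdot)).
\]
Chaining these yields $G^h(x^*, u(x^*), u(x^*) - u(\cdot)) \geq G^h(x^*, v(x^*), v(x^*) - v(\cdot))$, which directly contradicts the strict hypothesis of the lemma at $x = x^*$. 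Hence $u \leq v$ everywhere on $\G^h$.

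The argument contains no real obstacle: the only subtle point is the observation that the strict inequality in the hypothesis is what makes the conclusion possible without assuming strict monotonicity of $G^h$ in its second argument (which is exactly the weakening alluded to before the lemma statement, and the reason a traditional discrete comparison principle as in~\cite{ObermanEP} is unavailable here).
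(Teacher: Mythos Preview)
Your proof is correct and is essentially the same as the paper's: both argue by contradiction at a maximizer of $u-v$ over the finite grid, use the maximum property to get $u(x^*)-u(\cdot)\geq v(x^*)-v(\cdot)$, and then invoke monotonicity to contradict the strict hypothesis. The only cosmetic difference is that you split the monotonicity step into two applications (third argument, then second), whereas the paper applies it in one combined step.
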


\begin{remark}
Because the inequality in this discrete comparison principle is strict, it does not guarantee solution uniqueness.  Moreover, for some monotone schemes (including those described in the present article), it is not possible to find grid functions $u, v$ such that
$G^h(x,u(x),u(x)-u(\cdot)) < G^h(x,v(x),v(x)-v(\cdot))$ at every grid point.  This observation allows us to use the discrete comparison principle as a key element in a proof by contradiction argument.
\end{remark}

The proof of Lemma~\ref{lem:discreteComp} is essentially identical to the proof of~\cite[Theorem~5]{ObermanEP}, but is included here for completeness.

\begin{proof}[Proof of Lemma~\ref{lem:discreteComp}]
We suppose that $G^h(x,u(x),u(x)-u(\cdot)) < G^h(x,v(x),v(x)-v(\cdot))$ for every $x\in\G^h$ and choose $y\in\G^h$ such that
\[ u(y)-v(y) = \max\limits_{x\in\G^h}\left\{u(x)-v(x)\right\}, \]
which is well-defined since $\G^h$ is a finite set.  In particular, this yields
\[ u(y)-u(x) \geq v(y)-v(x), \quad x\in\G^h. \]

Now we suppose that
\[ u(y)-v(y)>0. \]
By monotonicity, we find that
\begin{align*}
G^h(y,u(y),u(y)-u(\cdot)) &\geq G^h(y,v(y),v(y)-v(\cdot))\\
  &> G^h(y,u(y),u(y)-u(\cdot)),
\end{align*}
where the last step is simply the hypothesis of the lemma.  This is a contradiction, and we conclude that
\[ \max\limits_{x\in\G^h}\left\{u(x)-v(x)\right\} \leq 0. \]
\end{proof}

Our goal in this article is to exploit the discrete comparison principle to prove that computed eigenvalues $c^h$ converge to the exact eigenvalue of~\eqref{eq:eigHJBC}.  From there, we introduce additional stability into our scheme, which allows us to modify the Barles-Souganidis argument to prove convergence of the computed solution $u^h$ even in the absence of a comparison principle.

\section{Reformulation of the PDE}\label{sec:pde}
We begin by proposing a reformulation of the PDE~\eqref{eq:eigHJBC}, which will allow us to build more stability into our numerical schemes.  Moreover, we demonstrate that viscosity solutions of this new equation (with the eigenvalue $\cex$ fixed) are equivalent to classical solutions of the original problem.

We remark first of all that solutions to the second boundary condition~\eqref{eq:BVP2} will trivially satisfy \emph{a priori} bounds on the solution gradient.  That is, choose any $R>\max\{\abs{p} \mid p \in \partial Y\}$
 and let $u$ satisfy the second boundary condition~\eqref{eq:BVP2}.  Then 
\bq\label{eq:gradBound} \abs{\nabla u(x)} < R \eq
for all $x \in \bar{X}$.

We also recall that any smooth convex solution of the second boundary condition, reformulated as in~\eqref{eq:HJBC}, will satisfy the constraints
\bq\label{eq:constraints}
\begin{aligned}
-\lambda_1(D^2u(x)) &\leq 0\\
H(\nabla u(x)) & \leq 0
\end{aligned}
\eq
for every $x\in X$.  Here $\lambda_1(M)$ denotes the smallest eigenvalue of the symmetric positive definite matrix $M$.

We propose combining all of these constraints into a new PDE
\bq\label{eq:modifiedPDE}\max\left\{F(D^2u(x)) + \cex, -\lambda_1(D^2u(x)),H(\nabla u(x)),\abs{\nabla u(x)} - R\right\} = 0, \quad x \in X. \eq
We remark that this equation is posed only in the interior of the domain, and boundary conditions will not be required to select a unique (up to additive constants) solution.  We also note that in the above equation, the eigenvalue $\cex$ will be interpreted as a known quantity.

\begin{theorem}[Equivalence of PDEs]\label{thm:equiv}
Let $u:\bar{X}\to\R$ be continuous and $\cex\in(0,n\pi/2)$ be the unique eigenvalue of~\eqref{eq:eigHJBC}.  Then $(u,\cex)$ is a classical solution of~\eqref{eq:eigHJBC} if and only if $u$ is a viscosity solution of~\eqref{eq:modifiedPDE}.
\end{theorem}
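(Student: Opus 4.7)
The plan is to prove the equivalence in both directions, using the Brendle--Warren existence-uniqueness result as a black box.

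\emph{Forward direction.} Suppose $(u,\cex)$ is a classical solution of~\eqref{eq:eigHJBC}. Since $u\in C^2(X)$, I verify~\eqref{eq:modifiedPDE} pointwise: the PDE gives $F(D^2u)+\cex=0$; uniform convexity of $u$ from Brendle--Warren gives $-\lambda_1(D^2u)<0$; the fact that $\nabla u:X\to Y$ is a diffeomorphism mapping interior to interior yields $H(\nabla u)<0$ in $X$; and $|\nabla u|-R<0$ by~\eqref{eq:gradBound}. Thus the max in~\eqref{eq:modifiedPDE} is identically zero and $u$ is trivially a viscosity solution.

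\emph{Reverse direction.} Suppose $u$ is a viscosity solution of~\eqref{eq:modifiedPDE}. The subsolution inequality applied to each argument of the max forces all four to be $\leq 0$ in the viscosity sense, immediately yielding convexity of $u$ (standard characterization via $-\lambda_1(D^2u)\leq 0$), the Lipschitz bound $|\nabla u|\leq R$, and $\nabla u(\bar X)\subseteq\overline Y$ via $H(\nabla u)\leq 0$. To conclude that $u$ coincides, up to an additive constant, with the Brendle--Warren classical solution $u^*$, I plan a comparison argument. Since $u^*$ is itself a viscosity solution of~\eqref{eq:modifiedPDE} by the forward direction, and since the three constraint terms $-\lambda_1(D^2u^*)$, $H(\nabla u^*)$, and $|\nabla u^*|-R$ are all \emph{strictly} negative in the interior $X$, the max operator reduces locally to $F(D^2u^*)+\cex$ near any interior point. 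Using $u^*$ (shifted by the appropriate additive constant) as a smooth test function against $u$ at an interior extremum of $u-u^*$, the subsolution inequality collapses to $F(D^2u^*(x_0))+\cex\leq 0$, which combined with the classical identity $F(D^2u^*(x_0))+\cex=0$ and the strict monotonicity of the arctan sum on the cone of positive definite matrices propagates equality and forces $u\equiv u^*+\mathrm{const}$ on $\bar X$ by continuity.

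\emph{Main obstacle.} The difficult step is the comparison argument just sketched, because~\eqref{eq:modifiedPDE} carries no prescribed boundary condition and the operator has no explicit zeroth-order dependence on $u$, so the standard strict-monotonicity contradictions used in viscosity-solution comparison proofs are unavailable. The strategy instead hinges on the strict negativity of the three non-$F$ constraints for the smooth benchmark $u^*$ in the interior, which permits $u^*$ to serve as a local test function without the other constraints ever becoming active. A secondary complication is that the extremum of $u-u^*$ may be attained on $\partial X$, where $H(\nabla u^*)=0$ is no longer strictly negative; a separate boundary analysis using one-sided test functions, or a penalty/doubling argument pushing the extremum inward, will be required to close this case.
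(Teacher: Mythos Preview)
Your forward direction and the first steps of the reverse direction (convexity of $u$, the gradient constraint $\partial u(X)\subset\bar Y$) match the paper.  The gap is in the comparison step.

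The interior test-function argument you sketch is vacuous.  At an interior maximum of $u-u^*$, using the shifted $u^*$ as a test function from above, the viscosity subsolution inequality for~\eqref{eq:modifiedPDE} reads
\[
\max\bigl\{F(D^2u^*(x_0))+\cex,\,-\lambda_1(D^2u^*(x_0)),\,H(\nabla u^*(x_0)),\,|\nabla u^*(x_0)|-R\bigr\}\leq 0,
\]
and since $u^*$ solves the equation classically this maximum is exactly zero.  You recover only the identity $F(D^2u^*(x_0))+\cex=0$, which you already knew; nothing about $u$ itself is learned, and there is no mechanism by which ``strict monotonicity of the arctan sum'' can propagate this into $u\equiv u^*+\text{const}$.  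The operator has no zeroth-order term, so equality at one point carries no information.

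The paper's argument runs in the opposite direction from what you propose.  Rather than trying to push the extremum into the interior, it uses that $u$ is a viscosity subsolution and $u^*$ a supersolution of the \emph{uniformly elliptic} component $F(D^2\cdot)+\cex=0$, so by Jensen's strong maximum principle the maximum of $u-u^*$ (if $u-u^*$ is not constant) must occur at a boundary point $x_0\in\partial X$.  There a nonlinear Hopf lemma gives $(p-\nabla u^*(x_0))\cdot n>0$ for every $p\in\partial u(x_0)$ and every outward $n$ with $n\cdot n_x(x_0)>0$.  Choosing the specific direction $n=\nabla H(\nabla u^*(x_0))$ and using convexity of $H$ yields $H(p)>H(\nabla u^*(x_0))=0$, so every $p\in\partial u(x_0)$ lies outside $\bar Y$, contradicting the subgradient constraint you already established.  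The boundary case you flagged as a ``secondary complication'' is in fact the entire content of the argument; the missing ingredients are the strong maximum principle for the $F$ component, the Hopf lemma, and the particular choice of normal direction tied to $H$.
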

\begin{proof}
This result is an immediate consequence of Lemmas~\ref{lem:equiv1}-\ref{lem:equiv2}, proved below.
\end{proof}

\begin{lemma}[Classical implies viscosity]\label{lem:equiv1}
Let  $(u,\cex)$ be a classical solution of~\eqref{eq:eigHJBC}. Then $u$ is a viscosity solution of~\eqref{eq:modifiedPDE}.
\end{lemma}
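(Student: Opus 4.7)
The plan is to show that when $(u,c_{\text{ex}})$ is a classical solution of~\eqref{eq:eigHJBC}, the four quantities appearing inside the max in~\eqref{eq:modifiedPDE} are each non-positive at every interior point $x\in X$, with the first term achieving equality. Once this pointwise identity is established, the fact that $u\in C^2(X)$ combined with the ellipticity (and convexity) of the constituent operators will immediately give both the viscosity subsolution and supersolution properties.

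First I would verify the four pointwise inequalities at an arbitrary $x\in X$:
\begin{enumerate}
\item $F(D^2u(x))+c_{\text{ex}}=0$ directly from~\eqref{eq:eigHJBC}.
\item $-\lambda_1(D^2u(x))\leq 0$, since the Brendle--Warren result quoted after~\eqref{eq:BVP2} guarantees that $u$ is convex, hence $D^2u(x)$ is positive semidefinite.
\item $H(\nabla u(x))\leq 0$, because $\nabla u(X)=Y$ means $\nabla u(x)\in \bar{Y}$, and by the defining property of $H$ stated just before~\eqref{eq:HJBC} we have $H\leq 0$ on $\bar{Y}$.
\item $|\nabla u(x)|-R<0$, since $\nabla u(x)\in \bar{Y}$ and $R$ is chosen larger than $\max_{p\in\partial Y}|p|$, which by convexity of $Y$ equals $\max_{p\in\bar Y}|p|$.
\end{enumerate}
Taking the max of these four quantities yields exactly $0$ at every $x\in X$.

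Next I would verify the viscosity subsolution property. Let $\phi\in C^2$ and suppose $u-\phi$ attains a local maximum at $x\in X$. Then $\nabla\phi(x)=\nabla u(x)$ and $D^2\phi(x)\geq D^2u(x)$. By ellipticity of $F$ (Definition~\ref{def:ELC}) we have $F(D^2\phi(x))\leq F(D^2u(x))$, and likewise $-\lambda_1(D^2\phi(x))\leq -\lambda_1(D^2u(x))$ since $\lambda_1$ is monotone in the positive semidefinite ordering. The remaining two terms depend only on $\nabla\phi(x)=\nabla u(x)$, so all four arguments are bounded above by the corresponding arguments evaluated on $u$, and the pointwise identity above then gives
\[\max\bigl\{F(D^2\phi(x))+c_{\text{ex}},\,-\lambda_1(D^2\phi(x)),\,H(\nabla\phi(x)),\,|\nabla\phi(x)|-R\bigr\}\leq 0.\]
For the supersolution property, at a local minimum of $u-\phi$ we have $D^2\phi(x)\leq D^2u(x)$, which gives $F(D^2\phi(x))+c_{\text{ex}}\geq F(D^2u(x))+c_{\text{ex}}=0$, and this single term is enough to bound the max from below by $0$.

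I do not anticipate any real obstacle: the entire argument is an interior pointwise check together with the routine observation that a $C^2$ function which solves a degenerate elliptic equation classically is automatically a viscosity solution. The only subtlety is ensuring that the $|\nabla u|-R$ slot is harmless at interior points, which is why item (4) above uses strict inequality from the choice of $R$.
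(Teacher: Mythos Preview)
Your argument is correct and follows exactly the same route as the paper: verify that each of the four constraints is satisfied pointwise (with the first achieving equality), conclude that~\eqref{eq:modifiedPDE} holds classically, and then pass to the viscosity formulation. The paper's own proof is terser---it simply asserts that the constraints~\eqref{eq:gradBound}--\eqref{eq:constraints} are ``trivially'' satisfied and then cites~\cite{CIL} for the standard fact that a $C^2$ classical solution of a degenerate elliptic equation is automatically a viscosity solution---whereas you spell out the sub- and supersolution verifications by hand, but the content is the same.
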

\begin{proof}
We remark that $u$ trivially satisfies the constraints~\eqref{eq:gradBound}-\eqref{eq:constraints}.  Since additionally 
\[ F(D^2u(x))+c_{ex} = 0, \]
it is certainly true that the modified equation~\eqref{eq:modifiedPDE} holds in the classical sense. It is a simple consequence that~\eqref{eq:modifiedPDE} will also hold in the viscosity sense~\cite{CIL}.
\end{proof}

\begin{lemma}[Viscosity implies classical]\label{lem:equiv2}
Let $u:\bar{X}\to\R$ be continuous and $\cex\in(0,n\pi/2)$ be the unique eigenvalue for~\eqref{eq:eigHJBC}.  If $u$ is a viscosity solution of~\eqref{eq:modifiedPDE} then $(u,\cex)$ is a classical solution of~\eqref{eq:eigHJBC}.
\end{lemma}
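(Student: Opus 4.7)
The plan is to identify the viscosity solution $u$ of the modified PDE with the Brendle-Warren classical solution $v$ of the original eigenvalue problem, which exists and is unique up to additive constants since $\cex$ is assumed to be the eigenvalue of \eqref{eq:eigHJBC}. The argument proceeds in three stages: distribute the subsolution property over the individual terms of the max; promote the supersolution property to yield $F(D^2u) + \cex \geq 0$; and then invoke interior regularity plus Brendle-Warren's uniqueness.

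For the first stage, a viscosity subsolution of $\max\{G_1,G_2,G_3,G_4\}\leq 0$ is simultaneously a viscosity subsolution of each $G_i \leq 0$. Applied to \eqref{eq:modifiedPDE} this immediately gives: $u$ is convex on $X$ (from $-\lambda_1(D^2u) \leq 0$), $u$ is $R$-Lipschitz (from $|\nabla u| - R \leq 0$), $\nabla u(x) \in \bar{Y}$ for $x \in X$ (from $H(\nabla u) \leq 0$), and $F(D^2u) + \cex \leq 0$ in the viscosity sense. Convexity plus the Lipschitz bound makes $u$ twice differentiable almost everywhere by Alexandrov's theorem.

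For the second stage, let $\phi \in C^2$ be a test function with $u - \phi$ attaining a local minimum at $x_0 \in X$. The viscosity supersolution property forces at least one of $\{F(D^2\phi(x_0)) + \cex,\, -\lambda_1(D^2\phi(x_0)),\, H(\nabla\phi(x_0)),\, |\nabla\phi(x_0)| - R\}$ to be nonnegative. Since $u$ is convex and $\phi$ touches from below, $\nabla\phi(x_0) \in \partial u(x_0) \subset \bar{Y}$; combined with the strict bound $R > \max_{p \in \partial Y}|p|$ this makes the fourth term strictly negative. It then suffices to rule out the two "obstacle" terms as the sole cause of the nonnegativity. I would do this by a perturbation argument: replacing $\phi$ by $\phi_\varepsilon(x) = \phi(x) - \varepsilon |x - x_0|^2/2$ preserves the local-minimum property of $u - \phi_\varepsilon$ while reducing all eigenvalues of $D^2\phi$ by $\varepsilon$, and shifting the gradient by a small translation can push $\nabla\phi_\varepsilon(x_0)$ into the interior of $Y$ whenever $\partial u(x_0)$ is not a singleton pinned to $\partial Y$. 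The degenerate case where $\nabla u(x_0)$ is rigidly on $\partial Y$ at an interior point $x_0$ would be ruled out by comparing with $v$: Brendle-Warren's theorem gives $\nabla v(X) = Y$ with $\nabla v$ a diffeomorphism sending $\partial X$ to $\partial Y$, and convexity together with the range constraint prevents interior gradients of $u$ from touching $\partial Y$ without violating this structure. Combined with the subsolution inequality, this gives $F(D^2u) + \cex = 0$ in the viscosity sense on $X$.

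For the third stage, the equation $F(D^2u) + \cex = 0$ restricted to the cone of convex Hessians is a concave fully nonlinear uniformly elliptic equation, so Evans-Krylov interior regularity upgrades $u$ to $C^{2,\alpha}_{\mathrm{loc}}(X)$. Brendle-Warren's uniqueness then identifies $u$ with $v$ up to an additive constant, so in particular $\nabla u(X) = Y$ and the boundary condition $H(\nabla u) = 0$ on $\partial X$ holds classically; thus $(u,\cex)$ is a classical solution of \eqref{eq:eigHJBC}. The main obstacle is the perturbation analysis in the second stage: controlling degenerate test functions and interior gradients on $\partial Y$ requires carefully combining convexity and subdifferential calculus with the strict data inequality $R > \max_{p\in\partial Y}|p|$ and the global structural information supplied by the known classical solution $v$.
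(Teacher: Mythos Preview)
Your overall strategy---extract the subsolution consequences, then try to upgrade the supersolution property to $F(D^2u)+\cex\geq 0$, then invoke Evans--Krylov and Brendle--Warren uniqueness---is a different route from the paper's, and the second stage as written does not go through.

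The specific perturbation you propose, $\phi_\varepsilon(x)=\phi(x)-\varepsilon|x-x_0|^2/2$, moves the convexity obstacle in the wrong direction: $D^2\phi_\varepsilon = D^2\phi - \varepsilon I$, so $-\lambda_1(D^2\phi_\varepsilon) = -\lambda_1(D^2\phi)+\varepsilon$ becomes \emph{larger}, not smaller. Thus if the supersolution inequality is being carried by $-\lambda_1(D^2\phi)\geq 0$, your perturbed test function makes this even more so, and you learn nothing about $F+\cex$. The opposite perturbation $\phi+\varepsilon|x-x_0|^2/2$ would shrink $-\lambda_1$ but destroys the local-minimum property of $u-\phi_\varepsilon$. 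More fundamentally, there is no reason a priori why a test function touching a convex $u$ from below cannot have $\lambda_1(D^2\phi(x_0))\leq 0$, so the obstacle term can genuinely be active. Your handling of the $H$-obstacle has a related circularity: ruling out $\nabla u(x_0)\in\partial Y$ at interior $x_0$ already requires the structural information you are trying to establish.

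The paper avoids this entirely by never attempting to show that $u$ solves $F(D^2u)+\cex=0$. It uses only the \emph{subsolution} property of $u$ for the $F$-component together with the supersolution property of the known classical solution $\uex$, applies Jensen's maximum principle to force $\max(u-\uex)$ onto $\partial X$, and then derives a contradiction from a Hopf-type boundary lemma combined with the convexity of $H$ and the constraint $\partial u(x)\cap\bar Y\neq\emptyset$. This comparison argument sidesteps the supersolution-splitting issue completely and delivers $u=\uex+\text{const}$ directly.
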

\begin{proof}
Let $\uex$ be any classical solution of~\eqref{eq:eigHJBC}.  From~\cite{brendle2010}, this is uniquely determined up to an additive constant.

We remark first of all that $u$ is a viscosity subsolution of the equation
\[ -\lambda_1(D^2u(x)) = 0. \]
From~\cite[Theorem~1]{ObermanCE}, $u$ is convex.

We also observe that $u$ is a convex viscosity subsolution of the equation
\[ H(\nabla u(x)) = 0. \]
From~\cite[Lemma~2.5]{HamfeldtBVP2}, the subgradient of $u$ satisfies
\[ \partial u(X) \subset \bar{Y}. \]
As $u$ is continuous up to the boundary, a consequence of this is that
\bq\label{eq:subgradBdy} \partial u(x) \cap \bar{Y} \neq \emptyset \eq
for every $x\in\partial X$.
 
 We now assume that $u-\uex$ is not a constant and show that this leads to a contradiction.  Since $\uex$ is a viscosity solution of the constrained PDE~\eqref{eq:modifiedPDE}, it is also a subsolution of the uniformly elliptic component
\[ F(D^2u(x))+\cex \leq 0.\]
Since $\uex$ is a classical solution of 
\[ F(D^2\uex(x))+\cex = 0, \]
it is also a viscosity solution~\cite{CIL} and a viscosity supersolution.

From~\cite[Theorem~3.1]{JensenMax}, the maximum of $u-\uex$ must be attained at some point $x_0\in\partial X$.  Moreover, by a nonlinear version of the Hopf boundary lemma~\cite{lian2020boundary}, we have that
\[ \frac{\partial (u-\uex)(x_0)}{\partial n} > 0\]
for any exterior direction $n$ satisfying $n\cdot n_x(x_0) > 0$.  That is, taking any $p\in\partial u(x_0)$, we must have
\[ (p-\nabla\uex(x_0))\cdot n > 0. \]
Now we consider in particular the choice of $n = \nabla H(\nabla\uex(x_0))$, which does satisfy the requirement $n\cdot n_x(x_0) >0$ as in Lemma~\ref{lem:HJOpHyperplane}.  Hence, 
$$(p-\nabla \uex(x_0))\cdot \nabla H(\nabla \uex(x_0)) > 0$$
On the other hand, since $H$ is convex, we know that 
$$H(p) \geq H(\nabla \uex(x_0)) + \nabla H(\nabla \uex(x_0))\cdot(p-\nabla \uex(x_0)) > H(\nabla \uex(x_0)) = 0.$$
The condition $H(p) > 0$ implies $p$ is outside $\bar{Y}$ for any $p\in\partial u(x_0)$, which contradicts~\eqref{eq:subgradBdy}.

We conclude that $u-\uex$ must be constant on $X$.  Since the classical solution of~\eqref{eq:eigHJBC} is unique up to additive constants, $u$ is a classical solution.
\end{proof}

\section{Numerical Method}\label{sec:numerics}

In this section, we describe our approach to numerically solving the eigenvalue problem~\eqref{eq:LagrangianPDE}-\eqref{eq:BVP2}.
Ultimately, we will establish convergence of this method (Theorems~\ref{thm:convEig}-\ref{thm:convSol}).

\subsection{Numerical framework}
The computational and convergence framework we employ involves a two-step approach.  Let us first suppose that we have discrete approximations $F^h,H^h,E^h,L^h$ of the PDE operators $F(D^2u), H(\nabla u), \abs{\nabla u}, -\lambda_1(D^2u)$.  The details of these discrete operators will be explained in the following subsections.  These are assumed to have a maximum truncation error of $\tau(h)$ as defined in Definition~\ref{def:truncation}.  We also let $x_0 \in \bar{X}$ be any fixed point in the domain and choose a sequence $x_0^h\in \G^h$ such that $x_0^h\to x_0$.  Finally, we choose some $\kappa(h)\geq 0$.  

We now employ a two-step procedure to solve for an approximation $(u^h,c^h)$ to the true solution $(\uex,\cex)$. 

\begin{enumerate}
\item[1.] Solve the discrete system
\bq\label{eq:disc1}
\begin{cases}
F^h(x,v^h(x)-v^h(\cdot)) + c^h = 0, & x \in \G^h\cap X\\
H^h(x,v^h(x)-v^h(\cdot)) = 0, & x \in \G^h\cap\partial X\\
v^h(x_0^h) = 0
\end{cases}
\eq
for the grid function $v^h$ and scalar $c^h$.
\item[2.] Solve the discrete system
\bq\label{eq:disc2}
\begin{cases}
\max\left\{F^h(x,w^h(x)-w^h(\cdot)) + c^h,L^h(x,w^h(x)-w^h(\cdot))\right.,\\
\left.\phantom{11111}H^h(x,w^h(x)-w^h(\cdot)),E^h(x,w^h(x)-w^h(\cdot))-R\right\} = 0, & x \in \G^h\cap X\\
\max\{H^h(x,w^h(x)-w^h(\cdot)) + \kappa(h) w^h(x),E^h(x,w^h(x)-w^h(\cdot))-R\} = 0, & x \in \G^h\cap\partial X
\end{cases} 
\eq
for the grid function $w^h$ and set
\bq\label{eq:uh} u^h(x) = w^h(x)-w^h(x_0^h). \eq
\end{enumerate}

We remark that while the second step is important for the convergence analysis, we do not find it necessary to solve this second system in practice.  Instead, we typically find that the solution $v^h$ obtained in step 1 automatically satisfies the second system with $\kappa(h) = 0$.  If this does not occur, solving the second system~\eqref{eq:disc2} becomes necessary.  In that case, we should choose   $\kappa(h) > 0$ to guarantee existence of a solution.  This relaxation of the boundary condition is needed since the solvability conditions for~\eqref{eq:disc1} and~\eqref{eq:disc2} may differ slightly.

We also observe that the final candidate solution $u^h$ that we compute satisfies the scheme
\bq\label{eq:uhInt}
\begin{aligned}
G^h(x,u^h(x)-u^h(\cdot))  &\equiv
\max\left\{\right.\left.F^h(x,u^h(x)-u^h(\cdot)) + c^h,L^h(x,u^h(x)-u^h(\cdot)),\right.\\
&\phantom{====}\left.H^h(x,u^h(x)-u^h(\cdot)),E^h(x,u^h(x)-u^h(\cdot))-R \right\} = 0
\end{aligned}
\eq
at interior points $x\in\G^h\cap X$ and satisfies the inequality
\bq\label{eq:uhBdy}
E^h(x,u^h(x)-u^h(\cdot))-R \leq 0
\eq
at all points $x\in\G^h$.

The approximation schemes will have to satisfy consistency and monotonicity conditions in order to fit within the requirements of our ultimate convergence theorems (Theorems~\ref{thm:convEig}-\ref{thm:convSol}), with some additional structure built into the discrete Eikonal operator~$E^h$.

\subsection{Quadtree meshes}
We begin by describing the meshes we use to discretize the PDE.  It is possible to construct convergent methods on very general meshes or point clouds.  However, 
 we desire a mesh with the flexibility to resolve directional derivatives in many directions and deal with complicated geometries, while retaining enough structure to allow for an efficient implementation.  For this reason, we choose to utilize piecewise Cartesian meshes augmented with additional nodes along the boundary.  These can be conveniently stored using a quadtree structure as in~\cite{HS_Quadtree}.  See Figure~\ref{fig:quadtree} for examples of such meshes.

\begin{figure}[ht]
\centering
\subfigure[]{
\includegraphics[width=0.45\textwidth]{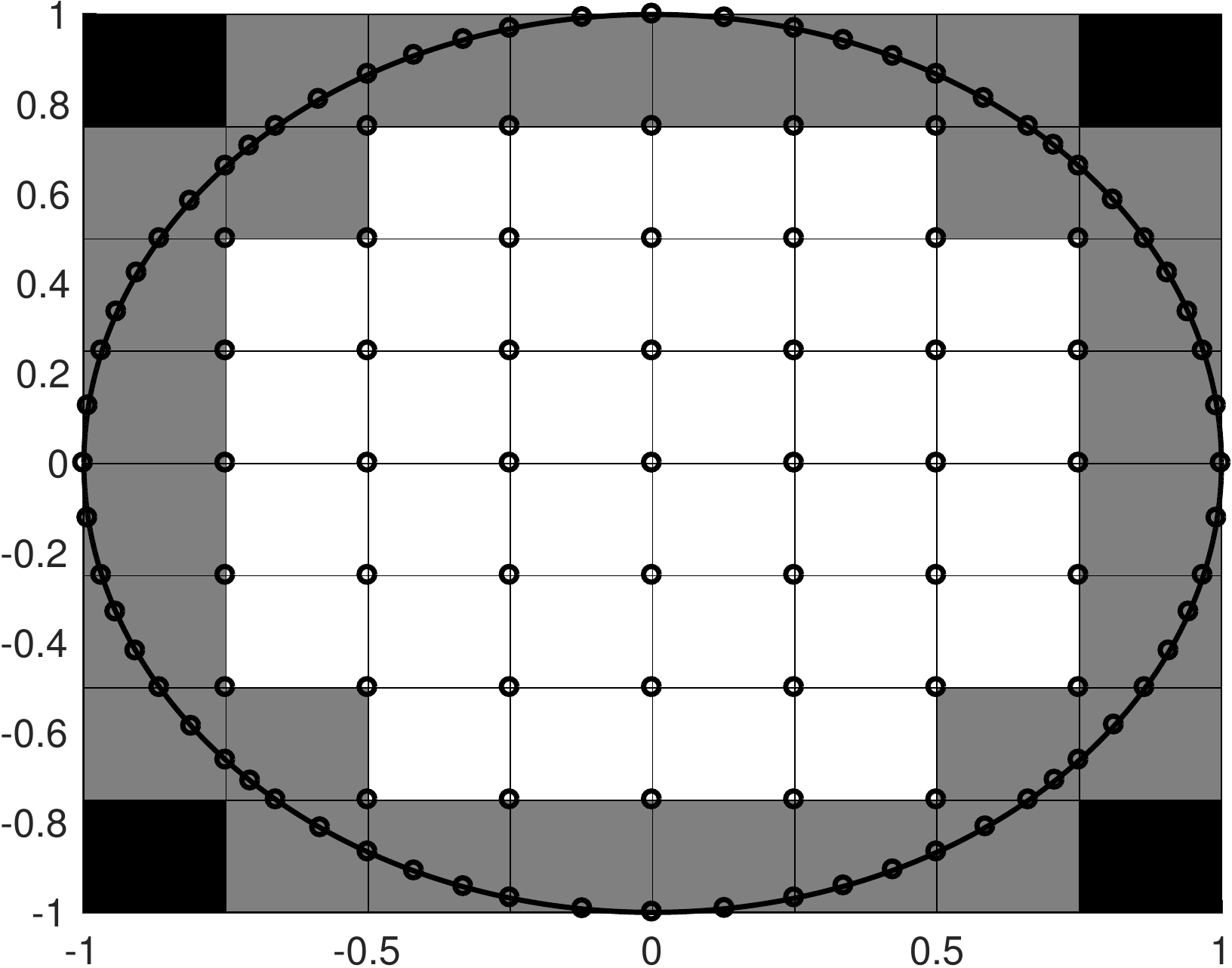}\label{fig:quad1}}
\subfigure[]{
\includegraphics[width=0.45\textwidth]{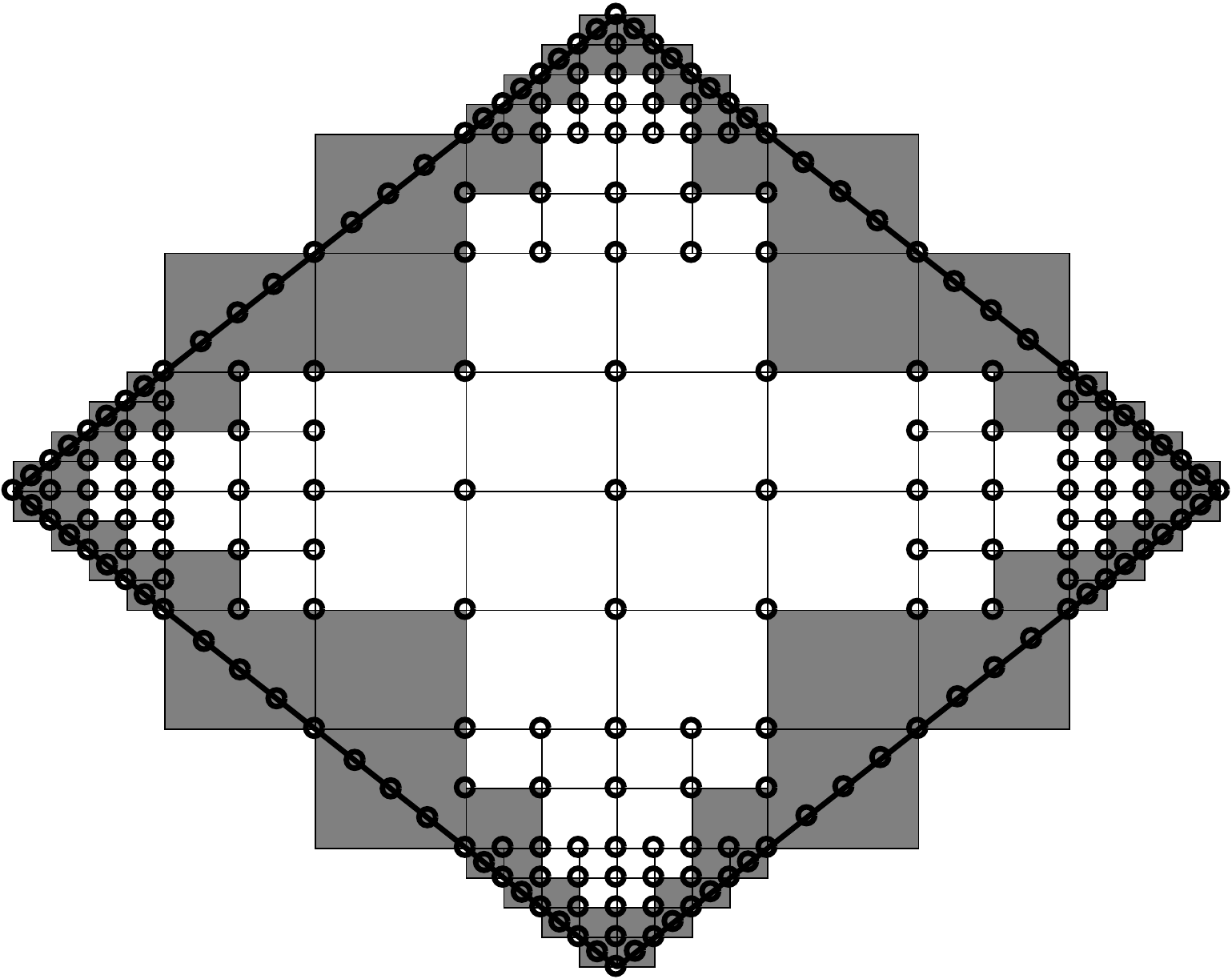}\label{fig:quad2}}
\caption{Examples of quadtree meshes. White squares are inside the domain, while gray squares intersect the boundary \cite{HS_Quadtree}.}\label{fig:quadtree}
\end{figure}

We require three parameters to describe the refinement of the mesh: the global resolution $h$, the boundary resolution $h_B$, and the gap $\delta$ between interior and boundary points.
\bq\label{eq:resolution} h = \sup\limits_{y\in X}\min\limits_{x\in\G^h}\abs{x-y}, \eq
\bq\label{eq:bdyRes} h_B = \sup\limits_{y\in\partial X}\min\limits_{x\in\G^h\cap X}\abs{x-y}, \eq
\bq\label{eq:delta} \delta = \min\limits_{x\in\G^h\cap X, y \in G^h\cap\partial X}\abs{x-y}. \eq
In order to construct consistent numerical methods, we require that $h_B = o(h)$ and $\delta = \bO(h)$ as $h\to0$.  This is easily accomplished as described in~\cite{HS_Quadtree}.

We will also associate to the mesh a directional resolution $d\theta$ and a search radius $r$, whose roles will become clear in the remainder of this section.  We choose
\bq\label{eq:dtheta}  d\theta = \bO(\sqrt{h}),\eq
\bq\label{eq:r} r = \bO(\sqrt{h}).\eq

\subsection{Approximation of second-order terms}
We now utilize the approach of~\cite{FroeseMeshfreeEigs,HS_Quadtree} to describe consistent, monotone approximations of the eigenvalues of the Hessian.  

We begin by noting that the second-order operators appearing in~\eqref{eq:modifiedPDE} all involve the eigenvalues of the Hessian matrix.  In two dimensions, these can be characterized in terms of the maximal and minimal second directional derivatives
\bq\label{eq:eigs}
\begin{aligned}
\lambda_1(D^2u) &= \min\limits_{\abs{\nu}=1} \frac{\partial^2u}{\partial\nu^2}\\
\lambda_2(D^2u) &= \max\limits_{\abs{\nu}=1} \frac{\partial^2u}{\partial\nu^2}.
\end{aligned}
\eq 

We begin by considering the approximation of the second directional derivative at a point $x_0$ along a generic direction $\nu\in\R^2$.  We first seek out candidate neighbors to use in discretizing this operator.  To begin, we consider all grid points within our search radius $r$.

Neighboring grid points can be written in polar coordinates $(\rho,\phi)$ with respect to the axes defined by the lines $x_0 + t\nu$, $x_0 + t\nu^\perp$.  We seek one neighboring discretization point in each quadrant described by these axes, with each neighbor aligning as closely as possible with the line $x_0 + t\nu$.  That is, we select the neighbors
\[ x_j \in \argmin\left\{{\sin^2\phi} \mid (\rho,\phi)\in\G^h\cap B(x_0,r) \text{ is in the $j$th quadrant}\right\}\]
for $j = 1, \ldots, 4$.  See Figure~\ref{fig:intNeighbours}.
Because of the ``wide-stencil'' nature of these approximations (since the search radius $r \gg h$), care must be taken near the boundary.  Our requirement that the boundary be sufficiently highly resolved ($h_B \ll h$) ensures that consistency can be maintained even at points $x_0$ close to the boundary.

\begin{figure}[ht]
\centering
\includegraphics[width=.65\textwidth]{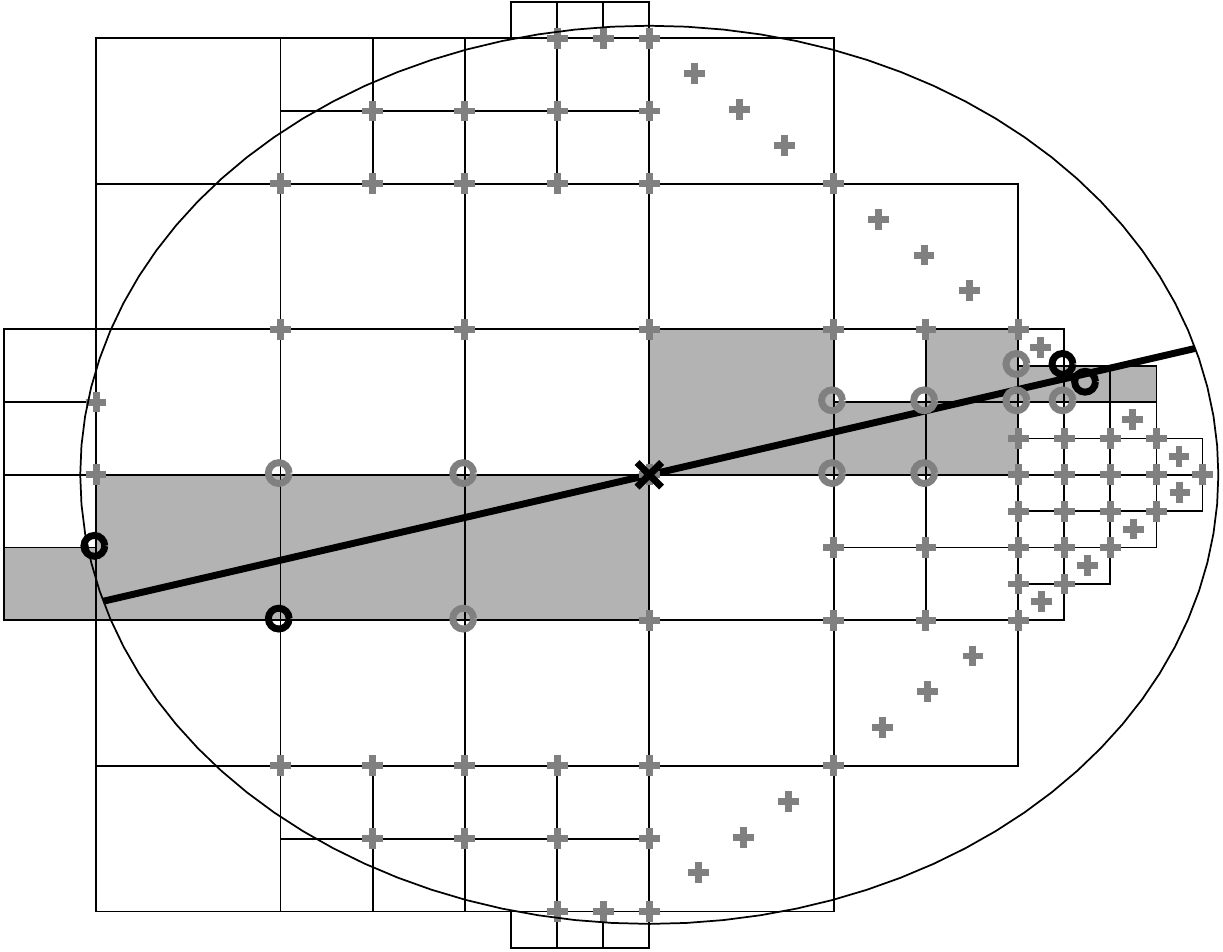}
\caption{Potential neighbors are circled in gray. Examples of selected neighbors are circled in black~\cite{HS_Quadtree}.}\label{fig:intNeighbours}
\end{figure}

We now seek an approximation of $\dfrac{\partial^2u(x_0)}{\partial\nu^2}$ of the form
\bq\label{eq:d2nu} \Dt_{\nu\nu}^hu(x_0) = \sum\limits_{j=1}^4 a_j(u(x_j)-u(x_0)). \eq
Via Taylor expansion of $u(x_j)$ (see~\cite{FroeseMeshfreeEigs}), we can verify that a consistent, (negative) monotone approximation is obtained with the coefficients
\[\begin{split}
a_1 &= \frac{2S_4(C_3S_2-C_2S_3)}{(C_3S_2-C_2S_3)(C_1^2S_4-C_4^2S_1)-(C_1S_4-C_4S_1)(C_3^2S_2-C_2^2S_3)}\\
a_2 &= \frac{2S_3(C_1S_4-C_4S_1)}{(C_3S_2-C_2S_3)(C_1^2S_4-C_4^2S_1)-(C_1S_4-C_4S_1)(C_3^2S_2-C_2^2S_3)}\\
a_3 &= \frac{-2S_2(C_1S_4-C_4S_1)}{(C_3S_2-C_2S_3)(C_1^2S_4-C_4^2S_1)-(C_1S_4-C_4S_1)(C_3^2S_2-C_2^2S_3)}\\
a_4 &= \frac{-2S_1(C_3S_2-C_2S_3)}{(C_3S_2-C_2S_3)(C_1^2S_4-C_4^2S_1)-(C_1S_4-C_4S_1)(C_3^2S_2-C_2^2S_3)}.
\end{split}
\]
where we use the polar coordinate characterization of the neighbors to define
\[ S_j =  \rho_j\sin\phi_j, \quad C_j = \rho_j\cos\phi_j.\]

We now want to use this to build up an approximation scheme for the eigenvalues of the Hessian via the characterization in~\eqref{eq:eigs}.  At the discrete level, instead of considering all possible second directional derivatives, we will consider a finite subset along the subset of unit directions
\bq\label{eq:directions}
V^h = \left\{(\cos(jd\theta),\sin(jd\theta)) \mid j = 1, \ldots, \floor{\dfrac{\pi}{d\theta}}\right\}.
\eq
Notice that $d\theta$, introduced in~\eqref{eq:dtheta}, now clearly describes the angular resolution of this subset of unit vectors.

We now approximate our second order operators by
\bq\label{eq:discEig}
\begin{aligned}
\lambda_1^h(x,u(x)-u(\cdot)) &= \min\limits_{\nu\in V^h} \Dt_{\nu\nu}u(x) \\
\lambda_2^h(x,u(x)-u(\cdot)) &= \max\limits_{\nu\in V^h} \Dt_{\nu\nu}u(x) \\
F^h(x,u(x)-u(\cdot)) &= -\arctan(\lambda_1^h(x,u(x)-u(\cdot))) - \arctan(\lambda_2^h(x,u(x)-u(\cdot)))\\
L^h(x,u(x)-u(\cdot)) &= -\lambda_1^h(x,u(x)-u(\cdot)).
\end{aligned}
\eq

This immediately leads to consistent, monotone approximations of our second-order operators since the underlying schemes for $\Dt_{\nu\nu}u$ are (negative) monotone.
\begin{lemma}[Second order approximations]\label{lem:monEig}
The approximation schemes $F^h$ and $L^h$ are consistent, monotone approximations of $F(D^2u)$ and $-\lambda_1(D^2u)$ respectively.
\end{lemma}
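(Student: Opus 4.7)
The plan is to derive both claims from the corresponding properties of the underlying directional-derivative stencils $\Dt_{\nu\nu}$. The discussion following~\eqref{eq:d2nu} already records that for each fixed $\nu\in V^h$, the formula $\Dt_{\nu\nu}u(x_0)=\sum_{j=1}^4 a_j(u(x_j)-u(x_0))$ is consistent with $\partial^2 u/\partial\nu^2$ and that the coefficients $a_j$ are non-negative, so $-\Dt_{\nu\nu}u(x_0)$ is a non-decreasing function of each difference $u(x_0)-u(x_j)$.

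Monotonicity is the easy half and I would handle it first, since it is purely algebraic. Writing
\[ L^h(x,u(x)-u(\cdot)) = \max_{\nu\in V^h}\bigl(-\Dt_{\nu\nu}u(x)\bigr) \]
exhibits $L^h$ as a finite maximum of monotone expressions, hence monotone. For $F^h$, strict monotonicity of $\arctan$ lets me pull the min/max outside to obtain
\[ -\arctan(\lambda_1^h)=\max_{\nu\in V^h}\bigl(-\arctan(\Dt_{\nu\nu}u)\bigr),\qquad -\arctan(\lambda_2^h)=\min_{\nu\in V^h}\bigl(-\arctan(\Dt_{\nu\nu}u)\bigr). \]
Each $-\arctan(\Dt_{\nu\nu}u)$ is a composition of an increasing function with a monotone scheme, and sums, min, and max of monotone schemes remain monotone, which establishes monotonicity of $F^h$.

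For consistency, fix a smooth test function $\phi$, a point $x\in\bar{X}$, and a sequence $y\in\G^h\to x$ as $h\to 0$. The Taylor expansion that determines the weights $a_j$ gives, for each fixed $\nu\in V^h$,
\[ \Dt_{\nu\nu}\phi(y) = \tfrac{\partial^2\phi}{\partial\nu^2}(x) + o(1), \]
uniformly in $\nu$, under the resolution assumptions $h_B=o(h)$ and $r=\bO(\sqrt h)$ that ensure suitable neighbors exist in every quadrant. Combined with the fact that $V^h$ is $\bO(d\theta)=\bO(\sqrt h)$-dense in the unit circle and that $\partial^2\phi/\partial\nu^2$ varies continuously in $\nu$, the characterization~\eqref{eq:eigs} then yields
\[ \lambda_1^h(y,\phi(y)-\phi(\cdot))\to\lambda_1(D^2\phi(x)),\qquad \lambda_2^h(y,\phi(y)-\phi(\cdot))\to\lambda_2(D^2\phi(x)). \]
Consistency of $L^h$ is immediate, and consistency of $F^h$ follows from continuity of $\arctan$.

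The main technical obstacle is showing that the discrete min/max over $V^h$ tracks the continuous min/max over the whole unit circle, which requires a two-sided comparison. The ``upper'' direction demands a uniform-in-$\nu$ bound on the stencil error $|\Dt_{\nu\nu}\phi(y)-\partial^2_{\nu\nu}\phi(x)|$, which has to be extracted from the explicit coefficient formulas in terms of $S_j,C_j$ together with the angular-geometry properties of the selected neighbors. The ``lower'' direction requires that for any unit direction $\nu^*$ attaining an eigenvalue of $D^2\phi(x)$ there exists $\nu\in V^h$ with $|\nu-\nu^*|=\bO(d\theta)$, so that smoothness of $\phi$ gives $|\partial^2_{\nu\nu}\phi(x)-\partial^2_{\nu^*\nu^*}\phi(x)|=\bO(d\theta)$. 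Balancing the stencil error against the angular discretization error, using $d\theta=\bO(\sqrt h)$ and $r=\bO(\sqrt h)$, closes the argument and gives both consistency claims.
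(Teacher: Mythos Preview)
Your argument is correct and is precisely the reasoning the paper has in mind: the paper does not give a separate proof of this lemma but states it as an immediate consequence of the fact that the underlying stencils $\Dt_{\nu\nu}$ are consistent and (negative) monotone, citing~\cite{FroeseMeshfreeEigs,HS_Quadtree} for the relevant Taylor-expansion and angular-resolution estimates. Your write-up simply makes explicit the two ingredients the paper leaves implicit---that monotonicity is preserved under $\arctan$, min, max, and addition, and that consistency of the discrete eigenvalues follows from the uniform-in-$\nu$ stencil error together with the $\bO(d\theta)$ density of $V^h$ in the unit circle---so there is nothing to add or correct.
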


\subsection{Approximation of first-order terms}
Monotone approximations of the first-order terms in~\eqref{eq:modifiedPDE} have been more well-studied; of particular note are upwind~\cite{Qian_sweeping,Zhao_sweeping} and Lax-Friedrichs schemes~\cite{Kao_LF}.  Here we briefly review one choice of discretization for the first-order terms in the interior of the domain, which is fairly easily extended onto the non-uniform grids we are considering.

Several options are available for the Hamilton-Jacobi operator $H(\nabla u)$.  A simple choice is a modification of a standard Lax-Friedrichs scheme.

To construct this, we need to first describe generalizations of standard first-order differencing operators. Let $x_0\in\G^h$ and consider a differencing operator to approximate a partial derivative in the coordinate direction $\nu = (1,0)$.  Let $x_1, x_2, x_3, x_4$ be the neighbors used in the approximation of $\Dt_{\nu\nu}u(x_0)$~\eqref{eq:d2nu}.  We can use these same neighbors to generate a consistent forward difference type approximation of the first derivative $\dfrac{\partial u(x_0)}{\partial\nu}$ as
\bq\label{eq:d1nu}
\Dt_{(1,0)}^+u(x_0) = b_1(u(x_1)-u(x_0)) + b_4(u(x_4)-u(x_0)).
\eq
If either $x_1-x_0$ or $x_4-x_0$ is parallel to the direction $\nu = (1,0)$, we can choose a standard forward difference.  Otherwise, let $h_i = (x_i-x_0)\cdot(1,0)$ and $k_i = (x_i-x_0)\cdot(0,1)$ be the horizontal and vertical displacements.  Then a consistent scheme is given by
\[ b_1 = -\frac{k_4}{k_1h_4-h_1k_4}, \quad b_4 = \frac{k_1}{k_1h_4-h_1k_4}. \]
We can similarly define the differencing operators $\Dt_{(1,0)}^-$, $\Dt_{(0,1)}^+$, and $\Dt_{(0,1)}^-$.  We can also construct the centered type approximations
\bq\label{eq:d1nucent}
\Dt_\nu u(x) = \frac{1}{2}\left(\Dt_\nu^+ + \Dt_\nu^-\right)u(x).
\eq

We note that the signed distance function $H$ has Lipschitz constant one.  Then a consistent, monotone approximation at interior points $x\in X$ is given by
\bq\label{eq:LF}
H^h(x,u(x)-u(\cdot)) = H(\Dt_{(1,0)}u(x),\Dt_{(0,1)}u(x)) - \epsilon(h)\left(\Dt_{(1,0),(1,0)}u(x)+\Dt_{(0,1),(0,1)}u(x)\right)
\eq
where
\[ \epsilon(h) = \max\left\{\frac{\abs{b_j}}{a_j} \mid j = 1, \ldots, 4\right\}. \]
Note that this is a natural generalization of the standard Lax-Friedrichs approximation to our augmented piecewise Cartesian grids.

Finally, we need to approximate the Eikonal term $\abs{\nabla u}$. Again, many options are possible.  A slightly non-standard choice that is convenient for the convergence analysis involves characterizing this as the maximum possible first directional derivative,
\[ \abs{\nabla u(x)} = \max\limits_{\abs{\nu}=1}\frac{\partial u(x)}{\partial\nu}. \]
Then a simple choice of discretization involves looking at all possible directions that can be approximated exactly within our search radius $r$.
\bq\label{eq:eik}
E^h(x,u(x)-u(\cdot)) = \max\left\{\frac{u(x)-u(y)}{\abs{x-y}} \mid y\in\G^h\cap B(x,r)\right\}.
\eq

\subsection{Boundary conditions}
Finally, we need to discretize the Hamilton-Jacobi operator $H(\nabla u)$ at points on the boundary.  Using the representation~\eqref{eq:HJLF} and the angular discretization~\eqref{eq:directions}, we would like to express this as
\bq\label{eq:bcapprox}
H^h(x,u(x)-u(\cdot)) = \sup\limits_{n\in V^h, n\cdot n_x > 0}\left\{\Dt_n u(x) - H^*(n)\right\}
\eq
where $\Dt_n u$ is a monotone approximation of the directional derivative of $u$ in the direction $n$.

To accomplish this at a point $x_0\in\partial X$, we need to identify points $x_1, x_2\in\G^h$ such that for small $t>0$, the line segment $x_0 - n t$ is contained in the convex hull of $x_0, x_1, x_2$ (which is a triangle).  Given the structure of our mesh, this is easily accomplished for neighbors satisfying
\[ \abs{x_1-x_0}, \abs{x_2-x_0} \leq \bO(h). \]
See Figure~\ref{fig:BNfig} for a visual of this selection.  

%A maximum of eight total neighbors are needed in order to consider all possible admissible directions $n$.  
Using these neighboring points, a consistent and monotone approximation for $\Dt_{n}u(x_0)$ can be built in exactly the same way as the forward differencing operator $\Dt_\nu^+$ was constructed in~\eqref{eq:d1nu}.  That is, we let $n^\perp$ be a unit vector orthogonal to $n$ and define
\[ h_i = (x_i-x_0)\cdot n, \quad k_i = (x_i-x_0)\cdot n^\perp, \]
noting that $h_i\leq 0$ for $i=1,2$ and $k_1k_2 \leq 0$.  Then a consistent, monotone approximation is given by
\bq\label{eq:bdyDirApprox}
\Dt_n u(x_0) = \frac{-k_2}{k_1h_2-h_1k_2}(u(x_1)-u(x_0)) + \frac{k_1}{k_1h_2-h_1k_2}(u(x_2)-u(x_0)).
\eq

\begin{figure}[ht]
\centering
\includegraphics[width=0.45\textwidth]{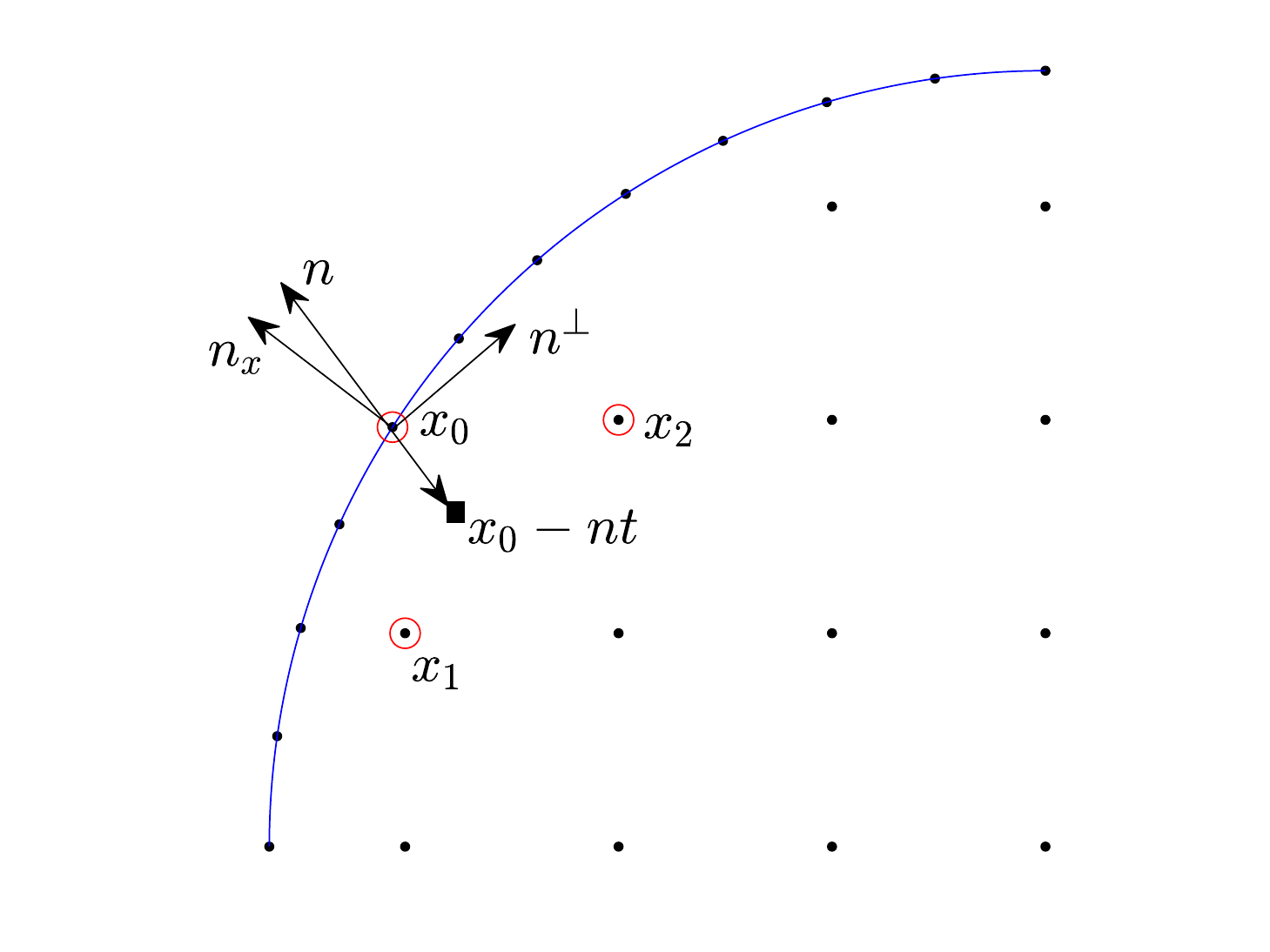}
\includegraphics[width=0.45\textwidth]{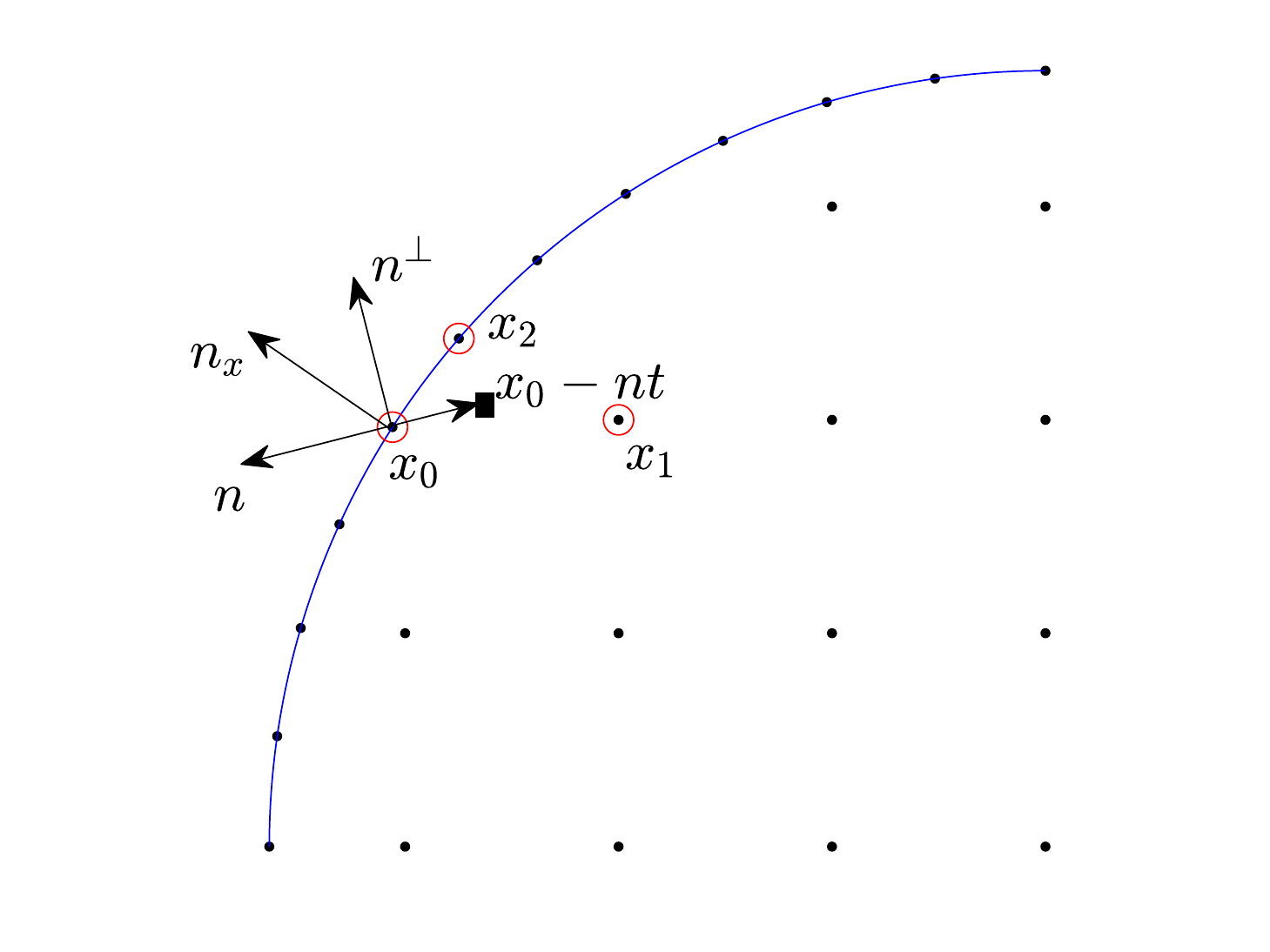}
\caption{Examples of neighbors $x_1, x_2$ needed to construct a monotone approximation of the directional derivative in the direction $n$ at the boundary point $x_0$.}
\label{fig:BNfig}
\end{figure}

We remark that the discrete operator $E^h$ at the boundary is unchanged from the form used on interior points; see~\eqref{eq:eik}.

\section{Convergence Analysis}\label{sec:analysis}
We are now prepared to state our convergence results for the numerical scheme presented in the previous section.  We separate this into two results: convergence of the eigenvalue $c^h$ and convergence of the grid function $u^h$.

\begin{theorem}[Convergence of the eigenvalue]\label{thm:convEig}
Let $(\uex,\cex)$ be any solution of the eigenvalue problem~\eqref{eq:LagrangianPDE}-\eqref{eq:BVP2} and let $(v^h,c^h)$ be any solution of the scheme~\eqref{eq:disc1}.  Then $c^h$ converges to $\cex$ as $h\to0$.
\end{theorem}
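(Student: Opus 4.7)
The plan is to argue by contradiction using the discrete comparison principle (Lemma~\ref{lem:discreteComp}) together with the observation in the remark following it, which states that for the schemes of this paper it is impossible to exhibit two grid functions whose scheme values are strictly ordered at every grid point. First I would establish that $c^h$ is uniformly bounded: since $F(M) = -\sum_i \arctan(\lambda_i(M))$ takes values in $[-n\pi/2,n\pi/2]$ and $F^h$ is a consistent approximation, evaluating the first equation of~\eqref{eq:disc1} at any interior grid point yields $|c^h| \leq n\pi/2 + o(1)$. By Bolzano--Weierstrass, it then suffices to prove that every convergent subsequence $c^{h_k}\to c^*$ must satisfy $c^* = \cex$.

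Suppose for contradiction that $c^* > \cex$ (the case $c^* < \cex$ being symmetric). Set $\eta = c^* - \cex > 0$, so that $c^h \geq \cex + \eta/2$ for small $h$. Fix $y_0$ in the interior of the convex set $Y$, and choose a scale $\epsilon_h$ satisfying $\tau(h) \ll \epsilon_h \ll \eta$ (for example, $\epsilon_h = \sqrt{\tau(h)}$). Define the test function
\[
\tilde u^h(x) = (1+\epsilon_h)\,\uex(x) - \epsilon_h\,y_0\cdot x.
\]
Because $D^2 \tilde u^h = (1+\epsilon_h)\,D^2\uex$ and $\uex$ is uniformly convex by~\cite{brendle2010}, applying the mean value theorem to $\arctan$ gives $F(D^2\tilde u^h) \leq -\cex - c_0\epsilon_h$ for a constant $c_0>0$ uniform in $x$. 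Combined with consistency of $F^h$, this produces
\[
F^h(x,\tilde u^h(x)-\tilde u^h(\cdot)) + c^h \geq (c^h - \cex) - C_1\epsilon_h - O(\tau(h)) \geq \eta/4 > 0
\]
at every interior grid point. For the boundary operator, $\nabla\tilde u^h(x) = \nabla\uex(x) + \epsilon_h(\nabla\uex(x)-y_0)$; since $\nabla\uex(x)\in\partial Y$ while $y_0$ lies in the interior of the convex, bounded set $Y$, the vector $\nabla\uex(x)-y_0$ has inner product with the outward unit normal $n_y$ to $\partial Y$ at $\nabla\uex(x)$ bounded below by a constant $c_1>0$ uniform in $x\in\partial X$. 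Consequently $H(\nabla\tilde u^h(x)) \geq c_1\epsilon_h$, and consistency of $H^h$ delivers $H^h(x,\tilde u^h(x)-\tilde u^h(\cdot)) \geq c_1\epsilon_h/2 > 0$ on $\partial X$.

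Collecting these estimates shows that $\tilde u^h$ is a strict discrete supersolution of~\eqref{eq:disc1}: the full scheme operator is strictly positive at every grid point, while $v^h$ makes the same operator vanish identically. Therefore the pair $(v^h,\tilde u^h)$ satisfies $G^h(v^h) < G^h(\tilde u^h)$ strictly on all of $\G^h$, contradicting the remark following Lemma~\ref{lem:discreteComp}. The mirror construction $\tilde u^h = (1-\epsilon_h)\uex + \epsilon_h y_0\cdot x$, which becomes a strict discrete subsolution at level $c^h$ when $c^* < \cex$, handles the remaining case by an identical argument. The main obstacle is to verify that the consistency errors of $F^h$ and $H^h$ on the $h$-dependent but uniformly smooth family $\tilde u^h$ are indeed dominated by $\epsilon_h$ in a uniform sense, and to cleanly justify the geometric lower bound $(\nabla\uex(x)-y_0)\cdot n_y \geq c_1 > 0$ from the uniform convexity of $Y$ and the $C^2$ boundary regularity of $\uex$ supplied by~\cite{brendle2010}.
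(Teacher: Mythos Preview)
Your argument is correct and follows the same overall strategy as the paper: perturb the exact solution $\uex$ to manufacture a strict discrete sub- or super-solution, and then invoke the discrete comparison principle together with translation invariance of the scheme to force a contradiction. The paper packages this as two separate lemmas---Lemma~\ref{lem:subExist} constructs $u_\pm^h = \uex \pm \epsilon\, D(x)\phi(x)$, where $D$ is the signed distance to $\partial X$ and $\phi$ a cutoff, so that the boundary gradient is pushed in the direction $\pm n_x$; Lemma~\ref{lem:eigCompare} then shows that a strict sub-solution at level $c_1$ and a strict super-solution at level $c_2$ force $c_1\le c_2$. Applying these two lemmas directly to $v^h$ (which satisfies the scheme with equality) yields the quantitative bound $\abs{c^h-\cex}\le\omega(h)=\bO(\tau(h))$ without passing through Bolzano--Weierstrass.

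Your perturbation $(1\pm\epsilon_h)\uex \mp \epsilon_h\, y_0\cdot x$ pushes the boundary gradient in the direction $\pm(\nabla\uex-y_0)$, i.e.\ radially in the \emph{target} rather than in the domain; the required positivity $(\nabla\uex(x)-y_0)\cdot n_y\ge c_1$ follows cleanly from $\dist(y_0,\partial Y)>0$, so this is arguably a more transparent barrier. The trade-off is that your route through subsequences loses the explicit rate $\bO(\tau(h))$ that the paper obtains. Two small points to tighten: the bound $\abs{c^h}\le n\pi/2$ comes not from consistency (which would require $v^h$ to be smooth) but from the explicit form $F^h=-\sum_i\arctan(\lambda_i^h)$, which is bounded by $n\pi/2$ on \emph{any} grid function; and your displayed inequality $F^h[\tilde u^h]+c^h\ge (c^h-\cex)-C_1\epsilon_h-O(\tau(h))$ uses the lower bound $F(D^2\tilde u^h)\ge -\cex-C_1\epsilon_h$, not the upper bound $\le -\cex-c_0\epsilon_h$ you wrote just before it (both are true, of course).
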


\begin{theorem}[Convergence of the grid function]\label{thm:convSol}
Let $(\uex,\cex)$ be a solution of the eigenvalue problem~\eqref{eq:LagrangianPDE}-\eqref{eq:BVP2} satisfying $\uex(x_0) = 0$ and let $u^h$ be any solution of the scheme~\eqref{eq:disc2}-\eqref{eq:uh}.  Then $u^h$ converges uniformly to $\uex$ as $h\to0$.
\end{theorem}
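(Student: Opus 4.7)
The plan is to prove convergence via a half-relaxed limits argument adapted to the modified PDE~\eqref{eq:modifiedPDE}. The overall strategy is: extract viscosity sub- and supersolutions of~\eqref{eq:modifiedPDE} from the sequence $u^h$, invoke Theorem~\ref{thm:equiv} to conclude these limits are classical solutions of the original eigenvalue problem~\eqref{eq:eigHJBC}, then use uniqueness up to additive constants together with the normalization $u^h(x_0^h) = 0$ to pin both half-relaxed limits down to $\uex$.

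The first step is to establish stability and precompactness of $\{u^h\}$. The key observation is that the Eikonal constraint $E^h(x,u^h(x)-u^h(\cdot)) \leq R$ holds at every grid point by~\eqref{eq:uhBdy}. By definition~\eqref{eq:eik}, this directly yields $u^h(x)-u^h(y) \leq R\abs{x-y}$ for any grid points $x,y$ with $\abs{x-y}\leq r$. Chaining such estimates along a path of neighbouring grid points joining any two given points (using the resolution property~\eqref{eq:resolution} on the quadtree and the fact that $r=\bO(\sqrt{h})$ still shrinks slowly enough to resolve $\bar X$ without gaps), and symmetrising, I would obtain a discrete Lipschitz bound $\abs{u^h(x)-u^h(y)} \leq R\abs{x-y} + o(1)$. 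Combined with the normalisation $u^h(x_0^h)=0$ and boundedness of $\bar X$, this gives a uniform bound $\|u^h\|_\infty \leq M$ together with equi-Lipschitz continuity, so that the half-relaxed limits
\[ \bar u(x) = \limsup_{h\to 0,\,\G^h\ni y\to x} u^h(y), \qquad \underline u(x) = \liminf_{h\to 0,\,\G^h\ni y\to x} u^h(y) \]
are finite, well-defined, and even Lipschitz continuous (hence continuous up to $\partial X$) with constant at most $R$.

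Next, I would verify that $\bar u$ is a viscosity subsolution and $\underline u$ a viscosity supersolution of the modified equation~\eqref{eq:modifiedPDE} with $\cex$ fixed. This uses a standard Barles-Souganidis argument together with the convergence $c^h\to \cex$ from Theorem~\ref{thm:convEig}. If $\phi$ is a smooth test function with $\bar u-\phi$ attaining a strict local maximum at $x_* \in X$, then (by the standard construction) one can choose $x_h\in \G^h$ with $x_h\to x_*$ and $u^h(x_h)\to\bar u(x_*)$ such that $u^h-\phi$ attains a local maximum at $x_h$. Monotonicity of each discrete operator in the difference argument (Lemma~\ref{lem:monEig} and the constructions of $H^h$, $E^h$) together with this maximum condition gives
\[ 0 = G^h(x_h, u^h(x_h)-u^h(\cdot)) \geq G^h(x_h, \phi(x_h)-\phi(\cdot)), \]
where $G^h$ is the max-scheme~\eqref{eq:uhInt}. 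Passing $h\to 0$, consistency of each component and $c^h\to\cex$ yield
\[ \max\bigl\{F(D^2\phi)+\cex,\,-\lambda_1(D^2\phi),\,H(\nabla\phi),\,\abs{\nabla\phi}-R\bigr\}(x_*) \leq 0, \]
which is precisely the subsolution property for~\eqref{eq:modifiedPDE}. The supersolution argument for $\underline u$ is entirely analogous using the interior scheme $G^h = 0$ at $x \in \G^h\cap X$.

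Finally, Theorem~\ref{thm:equiv} promotes both $\bar u$ and $\underline u$ to classical solutions of the original eigenvalue problem~\eqref{eq:eigHJBC} with eigenvalue $\cex$. By the Brendle-Warren uniqueness result~\cite{brendle2010}, each of $\bar u,\underline u$ differs from $\uex$ only by an additive constant. The normalisation $u^h(x_0^h)=0$ combined with $x_0^h\to x_0$ and Lipschitz continuity forces $\bar u(x_0)=\underline u(x_0)=0=\uex(x_0)$, fixing the constants to zero. Hence $\underline u \leq \bar u$ and $\underline u = \bar u = \uex$, which promotes the pointwise coincidence of the half-relaxed limits to uniform convergence $u^h\to\uex$ on $\bar X$ by a standard compactness argument. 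The main obstacle I anticipate is in the stability step: chaining the discrete Eikonal bound through the non-uniform quadtree to produce a genuinely uniform Lipschitz bound that is also valid up to the boundary (where the boundary scheme differs via the $\kappa(h)$ relaxation). A secondary technical point is ensuring that the selected grid points $x_h$ used to test the subsolution property remain in the interior $\G^h\cap X$ where~\eqref{eq:uhInt} holds, rather than drifting onto boundary nodes where only~\eqref{eq:uhBdy} is available; this should follow from the strictness of the local maximum and the geometric separation $\delta = \bO(h)$ between interior and boundary points.
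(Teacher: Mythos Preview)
Your overall architecture matches the paper's: use the Eikonal constraint to get Lipschitz stability, pass to a limit via a Barles--Souganidis argument, invoke Theorem~\ref{thm:equiv}, and pin down the additive constant by the normalisation. The divergence---and the gap---is in the choice of limiting device.

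You propose the half-relaxed limits $\bar u,\underline u$ and then write that ``Theorem~\ref{thm:equiv} promotes both $\bar u$ and $\underline u$ to classical solutions.'' Theorem~\ref{thm:equiv} is stated for viscosity \emph{solutions}, and the non-trivial direction (Lemma~\ref{lem:equiv2}) genuinely exploits the subsolution structure: a subsolution of a maximum of operators is a subsolution of each component, which yields convexity, $\partial u(X)\subset\bar Y$, and the inequality $F(D^2u)+\cex\le 0$ needed for the strong maximum principle and Hopf argument. Your $\underline u$ is only a \emph{supersolution} of~\eqref{eq:modifiedPDE}, i.e.\ $\max\{\ldots\}\ge 0$ at test points, and this does not decompose: you cannot conclude that $\underline u$ is convex, nor that it is a supersolution of $F+\cex=0$, so the machinery of Lemma~\ref{lem:equiv2} is unavailable. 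And since~\eqref{eq:modifiedPDE} has no comparison principle (solutions are only unique up to additive constants), there is no independent route to $\bar u\le\underline u$. The argument for $\underline u$ therefore does not close.

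The paper avoids this by trading the half-relaxed limits for compactness. It builds a piecewise linear extension $\tilde u^h$ on a triangulation with maximal angle bounded away from $\pi$ (Definitions~\ref{def:tri}--\ref{def:pwlin}), uses the Eikonal inequality on the three vertices of each triangle to bound $\nabla\tilde u^h$ there (Lemma~\ref{lem:lipschitz}; this replaces your chaining argument and delivers a clean uniform Lipschitz constant $2R/(1-M)$), and then applies Arzel\`a--Ascoli. Any subsequential \emph{uniform} limit is a full viscosity solution of~\eqref{eq:modifiedPDE} (Lemma~\ref{lem:subandsuper}), so Theorem~\ref{thm:equiv} applies directly; uniqueness plus $v(x_0)=0$ identifies each such limit with $\uex$, and the subsequence argument gives convergence of the whole family. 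The point is that half-relaxed limits are the right device when one has comparison but lacks equicontinuity; here the situation is exactly reversed, and the compactness route is the one that closes.
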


We also remark that, while our focus here is the construction of minimal Lagrangian graphs, this analysis could be readily adapted to more general eigenvalue problems of the form~\eqref{eq:EigProblem}.

\subsection{Convergence of the Eigenvalue}
We begin by establishing convergence of the eigenvalue (Theorem~\ref{thm:convEig}).
The proof of this result will require several short lemmas.  In these we will use the shorthand notation
\begin{align*}
F_i^h[u] &= F^h(x_i,u(x_i)-u(\cdot))\\
H_i^h[u] &= H^h(x_i,u(x_i)-u(\cdot)).
\end{align*}
We also define the following objects relating to sub- and super-solutions of the schemes.
\bq\label{eq:setSubs}
U_c^h = \{u \mid F^h_i[u]+c \leq 0, x_i \in \G^h\cap X; H^h_i[u] < 0, x_i\in\G^h\cap\partial X
\}
\eq
\bq\label{eq:setSuper}
V_c^h = \{v \mid F^h_i[u]+c \geq 0, x_i \in \G^h\cap X; H^h_i[u] > 0, x_i\in\G^h\cap\partial X\}
\eq

We begin by establishing that these sets of sub(super)-solutions are non-empty for appropriate choices of $c$.
\begin{lemma}[existence of sub(super) solutions]\label{lem:subExist}
There exist $u_+^h\in V_{c_{\text{ex}}+\omega(h)}^h$, $u_-^+h\in U_{c_{\text{ex}}-\omega(h)}$ where $\omega(h)$ is proportional to the maximum consistency error $\tau(h)$ of the scheme.
\end{lemma}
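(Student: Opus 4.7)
The plan is to construct explicit discrete super- and sub-solutions by perturbing the exact classical solution $\uex$ of~\eqref{eq:eigHJBC}, whose existence is guaranteed by the Brendle--Warren theorem cited above. By consistency of the discrete operators, evaluating the scheme directly at $\uex$ yields $F^h_i[\uex] + \cex = \Or(\tau(h))$ and $H^h_i[\uex] = \Or(\tau(h))$ uniformly on $\G^h$, so $\uex$ is already \emph{almost} simultaneously a sub- and super-solution at eigenvalue $\cex$; the only real obstacle to membership in $V^h_{\cex}$ or $U^h_{\cex}$ is the strict boundary inequality built into the definitions~\eqref{eq:setSubs}--\eqref{eq:setSuper}.

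To force a strict boundary inequality, I would fix any reference point $y_* \in \mathrm{int}(Y)$, set $\phi(x) = \uex(x) - y_* \cdot x$, and define
$$u_+^h = \uex + \epsilon(h)\phi, \qquad u_-^h = \uex - \epsilon(h)\phi$$
for a small parameter $\epsilon(h) > 0$ to be chosen. This perturbation is designed so that $\nabla\phi(x) = \nabla\uex(x) - y_*$; since $\nabla\uex$ maps $\partial X$ onto $\partial Y$, $y_*\in\mathrm{int}(Y)$, and $Y$ is uniformly convex, a standard compactness argument gives a uniform lower bound
$$\mu \equiv \min_{x\in\partial X}\,(\nabla\uex(x) - y_*)\cdot n_y(\nabla\uex(x)) > 0,$$
where $n_y$ is the outward unit normal to $\partial Y$. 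Moreover $D^2 u_\pm^h = (1 \pm \epsilon(h))D^2\uex$ remains positive definite, which keeps the interior operator firmly under control.

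For the interior, the Lipschitz property of $\arctan$ on the bounded range of eigenvalues of $D^2\uex$ gives $|F(D^2 u_\pm^h) - F(D^2\uex)| \leq C_1\epsilon(h)$, and combined with consistency this yields $F^h_i[u_+^h] + \cex \geq -C_2(\epsilon(h) + \tau(h))$ and $F^h_i[u_-^h] + \cex \leq C_2(\epsilon(h) + \tau(h))$. For the boundary, a first-order Taylor expansion of the signed-distance function $H$ around $\nabla\uex$, using $H(\nabla\uex)=0$ and $\nabla H(\nabla\uex) = n_y(\nabla\uex)$, yields
$$H(\nabla u_+^h(x)) \geq \epsilon(h)\mu - C_3\epsilon(h)^2, \qquad H(\nabla u_-^h(x)) \leq -\epsilon(h)\mu + C_3\epsilon(h)^2,$$
so after accounting for the consistency error $\Or(\tau(h))$ we retain strict inequalities once $\epsilon(h) \geq K\tau(h)$ for $K$ sufficiently large.

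Choosing $\epsilon(h) = K\tau(h)$ and $\omega(h) = C_2(\epsilon(h) + \tau(h)) = \Or(\tau(h))$ for appropriate constants then delivers $u_+^h \in V^h_{\cex + \omega(h)}$ and $u_-^h \in U^h_{\cex - \omega(h)}$, as required. I expect the main subtlety to be the nested ordering of the small parameters: the boundary perturbation $\epsilon(h)$ must dominate the truncation error $\tau(h)$ to force the boundary inequality to be strict, while the eigenvalue shift $\omega(h)$ must in turn dominate $\epsilon(h)$ to absorb the resulting interior perturbation of $F$; these constraints are simultaneously satisfiable precisely because all three quantities can be taken proportional to $\tau(h)$.
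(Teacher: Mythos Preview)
Your argument is correct, but the perturbation you choose differs from the paper's in an instructive way. The paper perturbs by a function $w$ that is a smooth cutoff of the signed distance to $\partial X$, so that $\nabla w = n_x$ on the boundary; it then invokes Lemma~\ref{lem:HJOpHyperplane} (the oblique-derivative structure $n_x\cdot n_y \geq \ell > 0$) to pass from a shift in the $n_x$ direction to a uniform decrease of the supremum $H(\nabla u) = \sup_{n\cdot n_x>\ell}\{\nabla u\cdot n - H^*(n)\}$. This gives the exact bound $H(\nabla u_-^h) \leq -\epsilon\ell$ with no quadratic remainder. You instead perturb by $\phi = \uex - y_*\cdot x$, which moves $\nabla\uex$ directly along a ray from an interior point of $Y$; this is adapted to the \emph{target} geometry rather than the domain, so you bypass Lemma~\ref{lem:HJOpHyperplane} entirely at the cost of a Taylor expansion of $H$ and an $\Or(\epsilon^2)$ term that must be absorbed. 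A pleasant side effect of your choice is the clean scaling $D^2u_\pm^h = (1\pm\epsilon)D^2\uex$, which makes the interior estimate transparent. Both routes rely on the same nesting of scales $\tau(h) \lesssim \epsilon(h) \lesssim \omega(h) = \Or(\tau(h))$, and both deliver the lemma; the paper's version ties the construction back to the obliqueness lemma already proved, while yours is more self-contained but requires enough regularity of $\partial Y$ to control the second-order remainder in $H$.
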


\begin{proof}
We begin by letting $D(x)$ be the signed distance function to the boundary of the domain $\partial X$. Note that $D$ is smooth in a $\delta$-neighborhood of the boundary $\partial X$ for some $\delta>0$.  Next, we let $\phi$ be a smooth cut-off function satisfying
\[
\phi(x) = \begin{cases}1, & \text{dist}(x,\partial X)<\delta/2\\
0, & \text{dist}(x,\partial X)\geq\delta.
\end{cases}\]
We can then define a smooth function 
\[ w(x) = D(x)\phi(x). \]
Notice that on the boundary $\partial X$, this will satisfy
\[ \nabla w(x) = \nabla D(x) = n_x, \quad x \in \partial X. \]

We choose some $\epsilon > \dfrac{\max\{C_1,C_2\}}{\ell}\tau(h)$, where the constants $C_1, C_2$ will be fixed later, and define
\[ u_-^h =  u_{\text{ex}} - \epsilon w, \quad u_+^h = u_{\text{ex}} + \epsilon w.\]
We will show that for suitable choices of $C_1, C_2$ and $\omega(h) = \bO(\tau(h))$ we have $u_-^h \in U_{c_{\text{ex}}-\omega(h)}^h$.  The argument regarding $u_+^h$ is similar.

Note that from Lemma~\eqref{lem:HJOpHyperplane}, if $x\in\partial X$ then
\begin{align*} H(\nabla u_-^h(x)) &= \sup\limits_{n\cdot n_x>\ell}\{\nabla u_-^h(x)\cdot n - H^*(n)\} \\
  &\leq \sup\limits_{n\cdot n_x>\ell}\{\nabla \uex(x)-H^*(n)\} - \epsilon\ell\\
	&= H(\nabla \uex(x)) - \epsilon\ell\\
	&= -\epsilon\ell.
\end{align*}
By consistency we have
\[ H_i^h[u_-^h] \leq  H(\nabla u_-^h(x_i)) + C_1\tau(h) \leq -\epsilon\ell+C_1\tau(h)\]
for some $C_1>0$.
Since $\epsilon > \dfrac{C_1\tau(h)}{\ell}$, we obtain
\[ H^h[u_-^h] < 0. \]

Since our PDE operator $F$ is Lipschitz, we can also find some $L>0$ so that
\[ F(D^2u_-^h(x)) \leq F(D^2u_{\text{ex}}(x)) + L\epsilon = -c_{\text{ex}}+L\epsilon. \]
By consistency we again have
\[ F^h_i[u_-^h] \leq F(D^2u_-^h(x_i)) + C_2\tau(h) \leq -c_{\text{ex}} + L\epsilon + C_2\tau(h) \]
Since $\epsilon > \dfrac{C_2\tau(h)}{\ell}$ and defining $\omega(h) = L\epsilon(h)+\tau(h)$ we have
\[ F^h[u_-^h] + c_{\text{ex}}-\omega(h) \leq 0. \]
We conclude that $u_-^h \in U_{c_{\text{ex}}-\omega(h)}^h$.
\end{proof}

Now using the discrete comparison principle, we can begin to see how the sets of sub(super)-solutions are related to each other, which will lead ultimately to constraints on our numerically computed eigenvalue.
\begin{lemma}[Comparison of eigenvalues]\label{lem:eigCompare}
Suppose $u_1 \in U^h_{c_1}$ and $u_2\in V^h_{c_2}$. Then $c_1 \leq c_2$.
\end{lemma}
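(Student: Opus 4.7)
The plan is to argue by contradiction: assume $c_1 > c_2$ and derive a violation of the discrete comparison principle (Lemma~\ref{lem:discreteComp}). The crucial observation that makes the argument work is that the operators $F^h$ and $H^h$ depend on the grid function only through the differences $u(x) - u(\cdot)$; consequently, the sub/super-solution sets $U^h_c$ and $V^h_c$ are invariant under the addition of an arbitrary constant to the grid function.

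Concretely, I would first assemble the interior and boundary discretizations into a single monotone scheme
\[
G^h[u](x) = \begin{cases} F^h(x,u(x)-u(\cdot)) + c_2, & x \in \G^h\cap X, \\ H^h(x,u(x)-u(\cdot)), & x \in \G^h\cap\partial X, \end{cases}
\]
which inherits monotonicity from the component operators (Lemma~\ref{lem:monEig} and the boundary construction). By hypothesis, $u_2 \in V^h_{c_2}$ gives $G^h[u_2](x) \geq 0$ at every grid point. For any constant $M \in \R$, the shifted function $u_1 + M$ still belongs to $U^h_{c_1}$ by translation invariance, so
\[
G^h[u_1+M](x) = F^h_i[u_1] + c_2 \leq c_2 - c_1 < 0
\]
at interior points, while $G^h[u_1+M](x) = H^h_i[u_1] < 0$ at boundary points. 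Thus the strict inequality $G^h[u_1+M] < G^h[u_2]$ holds at every grid point, and Lemma~\ref{lem:discreteComp} forces $u_1(x) + M \leq u_2(x)$ for every $x \in \G^h$ and every constant $M$. Taking $M$ larger than $\max_{x\in\G^h}(u_2(x)-u_1(x))$ produces a contradiction, yielding $c_1 \leq c_2$.

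The main obstacle is not the invocation of the comparison principle itself but the arrangement of its hypotheses: Lemma~\ref{lem:discreteComp} demands a \emph{strict} inequality at \emph{every} grid point, and this is only available because (i) the boundary conditions in the definitions of $U^h_c, V^h_c$ were formulated with strict inequalities, (ii) the assumed $c_1 > c_2$ supplies strict inequality in the interior, and (iii) the translation invariance of $F^h, H^h$ allows $u_1$ to be shifted freely, which is what ultimately produces the contradiction.
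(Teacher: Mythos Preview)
Your proof is correct and follows essentially the same route as the paper: argue by contradiction from $c_1>c_2$, exploit translation invariance of $F^h,H^h$ to shift $u_1$, obtain the strict inequality $G^h[u_1+M]<G^h[u_2]$ at every grid point (interior strictness from $c_2-c_1<0$, boundary strictness from the definitions of $U^h_{c_1},V^h_{c_2}$), and invoke Lemma~\ref{lem:discreteComp} to force $u_1+M\le u_2$, contradicting the freedom in $M$. The only cosmetic difference is that the paper fixes the shift at the outset so that $u_1>u_2$ and then derives $u_1\le u_2$, whereas you leave $M$ arbitrary and let it blow up at the end; the logical content is identical.
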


\begin{proof}
Suppose instead that $c_1>c_2$.  Note that for any constant $k$ we also have $u_1 + k \in U^h_{c_1}$.  Thus, we can assume that $u_1 > u_2$.  Now we estimate
\[ F^h_i[u_1] + c_2 < F^h_i[u_1] + c_1 \leq 0 \leq F^h_i[u_2] + c_2, \quad x_i \in \G^h\cap X \]
and
\[ H^h_i[u_1] <  0 < H^h_i[u_2], \quad x_i \in \G^h\cap\partial X.\]
By the discrete comparison principle (Lemma~\ref{lem:discreteComp}) we have $u_1 \leq u_2$, a contradiction.
\end{proof}

With these lemmas in place, we can now prove convergence of the numerically computed eigenvalue.

\begin{proof}[Proof of Theorem~\ref{thm:convEig}]
Recall that
\[ F_i^h[v^h] + c^h = 0, \quad x_i \in \G^h\cap X\]
and
\[ H_i^h[v^h] = 0, \quad x_i \in\G^h\cap\partial X. \]
Following Lemmas~\ref{lem:subExist}-\ref{lem:eigCompare} we conclude that
\[ c_{\text{ex}}-\omega(h) \leq c^h \leq c_{\text{ex}}+\omega(h). \qedhere \]
\end{proof}

Having proved the convergence of the eigenvalue $c^h \to c_{\text{ex}}$, this reduces our task from the convergence of an eigenvalue problem to the convergence of a fully nonlinear elliptic PDE.

\subsection{Convergence of the grid function}
We now turn our attention to the convergence of the approximation $u^h$ to the solution $\uex$ (Theorem~\ref{thm:convSol}).

In order to prove this theorem, we first need to construct a piecewise linear extension $\ulin^h$ of the grid function $u^h$.

Let $T^h$ be a triangulation of $\G^h$.  In particular, given the structure of our balanced quadtree mesh (augmented on the boundary), we can construct such a triangulation such that the maximal angle of any triangle is bounded uniformly away from $\pi$.
\begin{definition}[Structure of triangulation]\label{def:tri}
Define $T^h$ to be a triangulation of $\G^h$ satisfying the following properties:
\begin{enumerate}
\item[(a)] There exists some $M<1$ (independent of $h$) such that if $t \in T^h$ has the interior angles $\theta_1 \leq \theta_2 \leq \theta_3$ then 
\bq\label{eq:angles}
\abs{\cos\theta_3} \leq M.
\eq
\item[(b)] If $t\in T^h$, then at most two nodes of $t$ are contained on the boundary $\partial X$.
\item[(c)] If $t\in T^h$, then the diameter of $t$ is bounded by $2h$.
\end{enumerate}
\end{definition}

We note that since $\G^h \subset \bar{X}$, we need to extend triangles that intersect the boundary in order to obtain a decomposition $\tilde{T}^h$ that fully covers the domain.  To do this, we define the regions $\tilde{t}_i$ as follows:
\begin{definition}[Extension of triangulation]\label{def:regions}
Let $t \in T^h$, with the nodes $x_0,x_1,x_2$.  Then we define the corresponding region $\tilde{t}\in \tilde{T}^h$ as follows:
\begin{enumerate}
\item[(a)] If at least two nodes of $t$ are in $X$, set
\[ \tilde{t} = t. \]
\item[(b)] If two nodes $x_1, x_2 \in \partial X$, set
\[ \tilde{t} = \text{Conv}\{x_0, x_0+2(x_1-x_0),x_0+2(x_2-x_0)\} \cap \bar{X}. \]
\end{enumerate}
\end{definition}
We remark that 
\[ \bigcup_{\tilde{t}\in\tilde{T}} \tilde{t} = \bar{X}. \]

Now we are able to define a continuous piecewise linear extension.
\begin{definition}[Extension of grid function]\label{def:pwlin}
Define the unique continuous piecewise linear function $\ulin^h$ satisfying:
\begin{enumerate}
\item[(a)] $\ulin^h(x) = u^h(x)$ for all $x\in\G^h$.
\item[(b)] $\ulin^h(x)$ is a linear function on each region $\tilde{t}\in\tilde{T}$.
\end{enumerate}
\end{definition}
We remark that $\ulin^h$ will also satisfy the approximation scheme~\eqref{eq:uh}-\eqref{eq:uhBdy}.

An important element to our convergence proof will be to establish uniform Lipschitz bounds on the approximations~$\ulin^h$.
\begin{lemma}[Lipschitz bounds]\label{lem:lipschitz}
There exists a constant $L>0$ such that the Lipschitz constant of $\ulin^h$ is bounded by $L$ for all sufficiently small $h>0$.
\end{lemma}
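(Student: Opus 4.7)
The plan is to exploit the Eikonal term $E^h - R$ that is built into both the interior scheme~\eqref{eq:uhInt} and the boundary inequality~\eqref{eq:uhBdy}. Since each is written as a maximum of quantities that must be at most zero, we immediately obtain $E^h(x, u^h(x) - u^h(\cdot)) \leq R$ at every grid point $x \in \G^h$. Unwinding the definition~\eqref{eq:eik} of $E^h$ yields the one-sided local bound $u^h(x) - u^h(y) \leq R\abs{x - y}$ for every $y \in \G^h \cap B(x, r)$, and swapping the roles of $x$ and $y$ upgrades this to a two-sided Lipschitz estimate $\abs{u^h(x) - u^h(y)} \leq R\abs{x-y}$ whenever $\abs{x - y} \leq r$.

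I would then promote this local bound to a uniform Lipschitz bound on all of $\G^h$ via a chaining argument. Given any two grid points $x, y \in \G^h$, the convexity of $X$ ensures $\overline{xy} \subset \bar X$. Combining the resolution property~\eqref{eq:resolution} with the fact that $r = \bO(\sqrt h) \gg h$, I would construct a chain $x = z_0, z_1, \ldots, z_N = y$ in $\G^h$ with consecutive spacings $\abs{z_{k+1} - z_k} \leq r$ and total length at most $\abs{x - y} + \bO(h)$. Summing the local estimate along this chain yields $\abs{u^h(x) - u^h(y)} \leq R\abs{x - y} + \bO(h)$, so that $u^h$ restricted to $\G^h$ is uniformly Lipschitz with constant, say, $L_0 = 2R$ for all sufficiently small $h$.

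The final step is to transfer this pointwise discrete bound to the piecewise linear extension $\ulin^h$. On each region $\tilde t \in \tilde T^h$ the function is linear, so it suffices to bound the constant gradient there. Selecting a vertex $x_0$ of the underlying triangle whose associated interior angle $\theta$ satisfies $\abs{\cos\theta} \leq M$ via Definition~\ref{def:tri}(a), and writing any unit direction as $v = \alpha \hat e_1 + \beta \hat e_2$ in the non-orthogonal basis of the two edges from $x_0$, the identity $\alpha^2 + \beta^2 + 2\alpha\beta\cos\theta = 1$ forces $\abs{\alpha} + \abs{\beta} \leq \sqrt{2/(1-M)}$. Since the edge-direction derivatives of $\ulin^h$ are grid-point difference quotients bounded by $L_0$, we conclude $\abs{\nabla \ulin^h} \leq L_0 \sqrt{2/(1-M)} =: L$. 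For the extended regions introduced in Definition~\ref{def:regions}(b), $\ulin^h$ is simply the linear extension of the same triangle, so the identical bound applies.

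The hard part will be the chaining construction: verifying that consecutive grid points can be placed within the search radius $r$ while the total chain length stays close to $\abs{x - y}$, so that the local slope bound is not amplified in the aggregate. This is feasible precisely because the search radius $r = \bO(\sqrt h)$ is asymptotically much larger than the grid resolution $h$ required by~\eqref{eq:resolution}, so sampling $\overline{xy}$ at spacing roughly $r/2$ and snapping to nearby grid nodes distorts the total length by only $\bO(h)$, which is negligible compared to $\abs{x-y}$ once $h$ is small.
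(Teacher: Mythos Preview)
Your proposal is correct and follows essentially the same strategy as the paper: extract the pointwise bound $E^h(x,u^h(x)-u^h(\cdot))\leq R$ from~\eqref{eq:uhInt}--\eqref{eq:uhBdy}, read off $\abs{u^h(x_i)-u^h(x_j)}\leq R\abs{x_i-x_j}$ for nearby grid points, and then use the maximal-angle condition of Definition~\ref{def:tri}(a) to control $\nabla\ulin^h$ on each triangle.

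The one difference worth noting is that your chaining step is unnecessary. Since every triangle $t\in T^h$ has diameter at most $2h$ (Definition~\ref{def:tri}(c)) and $2h<r=\bO(\sqrt{h})$ for small $h$, the three vertices of any triangle already lie in each other's search balls $B(\cdot,r)$, so the local bound $\abs{u^h(x_i)-u^h(x_j)}\leq R\abs{x_i-x_j}$ applies directly to the edge difference quotients you use in the last step. The paper exploits exactly this: it works triangle-by-triangle from the outset, never passing through a global discrete Lipschitz estimate. Dropping the chain also sidesteps a small inaccuracy in your write-up (snapping to grid nodes at spacing $r/2$ with perturbations of size $h$ gives excess length $\bO(\sqrt{h})$, not $\bO(h)$, since $h/r=\bO(\sqrt{h})$); this does not affect the conclusion, but it is cleaner to avoid. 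Your linear-algebra computation at the end (expressing a unit vector in the edge basis and bounding $\abs{\alpha}+\abs{\beta}$) is a valid alternative to the paper's dual computation, which instead expands the gradient itself in the edge vectors and bounds the coefficients; both routes yield a constant depending only on $R$ and $M$.
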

\begin{proof}
We begin by considering the function $\ulin^h$ restricted to some fixed region $\tilde{t} \in \tilde{T}^h$.  Let $x_0, x_1, x_2$ be the nodes of $\tilde{t}$. Without loss of generality, we can assume that the maximal interior angle $\theta$ of $\tilde{t}$ occurs at the node $x_0$.

Now we know that $x_i\in\G^h$ for $i=0,1,2$.  Since $\ulin^h$ satisfies the scheme~\eqref{eq:uhBdy}, we know that
\[ E^h(x_i,\ulin^h(x_i)-\ulin^h(\cdot)) - R \leq 0. \]
From the definition of $E^h$~\eqref{eq:eik}, we can conclude that
\[ \ulin^h(x_i) \leq \ulin^h(y) + R\abs{x_i-y} \]
for every $y\in \G^h\cap B(x_i,r)$.  In particular, this holds for $y = x_0, x_1, x_2$ since the diameter of $\tilde{t}$ is bounded by $2h < r = \bO(\sqrt{h})$ for small enough $h>0$.  Thus, we obtain the discrete Lipschitz bounds
\[ \abs{\ulin^h(x_i)-\ulin^h(x_j)} \leq R\abs{x_i-x_j}, \quad i,j\in\{0,1,2\}. \]

We now use this to bound the gradient of $\ulin^h$ over the region $\tilde{t}$.  Notice that for $x\in\tilde{t}$ we can write
\[ \ulin^h(x) = \ulin^h(x_0) + p\cdot(x-x_0)\]
where $p = \nabla\ulin^h(x)$ is constant over this region.  Since $x_1-x_0$ and $x_2-x_0$ span $\R^2$, we can find constants $a_1, a_2\in\R$ such that
\[ p = a_1(x_1-x_0) + a_2(x_2-x_0). \]

Now we use our discrete Lipschitz bounds to compute
\[ R\abs{x_1-x_0} \geq \abs{u_1-u_0} = \abs{x_1-x_0}\abs{a_1\abs{x_1-x_0} + a_2\abs{x_2-x_0}\cos\theta}. \]
Simplifying and applying the bound on the maximal angle (Definition~\ref{def:tri}) we obtain
\[ R \geq \abs{a_1}\abs{x_1-x_0} - M \abs{a_2} \abs{x_2-x_0}. \]
Similarly,
\[ R \geq \abs{a_2}\abs{x_2-x_0} - M\abs{a_1}\abs{x_1-x_0}. \]
Combining the two above expressions, we find that
\[ R(M+1) \geq (1-M^2)\abs{a_2}\abs{x_2-x_0} \]
and thus
\[ \abs{a_2} \leq \frac{R}{(1-M)\abs{x_2-x_0}}. \]
An equivalent bound is available for $\abs{a_1}$.

Now we can bound $p = \nabla\ulin^h(x)$ over the region $\tilde{t}$ by
\[ \abs{p} \leq \abs{a_1}\abs{x_1-x_0} + \abs{a_2}\abs{x_2-x_0} \leq \frac{2R}{1-M} \equiv L. \]

Since $\ulin^h$ is piecewise linear, its Lipschitz constant will be bounded by the maximum Lipschitz constant over each region $\tilde{t}\in\tilde{T}^h$, which is given by $L$.
\end{proof}

An immediate consequence of this is uniform bounds for $\ulin^h$.
\begin{lemma}\label{lem:uniformbounduh}
There exists a constant $C>0$ such that $\|\ulin^h\|_\infty \leq C$ for all sufficiently small $h>0$.
\end{lemma}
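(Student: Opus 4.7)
The plan is to deduce the uniform bound as a direct corollary of the Lipschitz bound established in Lemma~\ref{lem:lipschitz}, using the normalization built into the definition~\eqref{eq:uh} of $u^h$.

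First I would recall that by construction $u^h(x) = w^h(x) - w^h(x_0^h)$, so $u^h(x_0^h) = 0$ and therefore $\ulin^h(x_0^h) = 0$ as well, since the piecewise linear extension in Definition~\ref{def:pwlin} agrees with $u^h$ on $\G^h$. Since $x_0^h \in \G^h \subset \bar{X}$ and $\bar{X}$ is bounded (the problem domain $X$ is assumed bounded in the paper's setup), the diameter $D \equiv \mathrm{diam}(\bar{X})$ is finite and independent of $h$.

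Next, I would invoke Lemma~\ref{lem:lipschitz} to obtain a constant $L>0$, independent of $h$ (for all sufficiently small $h>0$), such that $\ulin^h$ is $L$-Lipschitz on $\bar{X}$. For any $x\in\bar{X}$, applying the Lipschitz bound and the normalization gives
\[
|\ulin^h(x)| = |\ulin^h(x) - \ulin^h(x_0^h)| \leq L\,|x-x_0^h| \leq L\,D.
\]
Setting $C = L\,D$ then yields $\|\ulin^h\|_\infty \leq C$ as claimed.

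There is no real obstacle here; the only minor point to verify is that the anchor point $x_0^h$ indeed lies in $\bar{X}$ (which follows from $\G^h \subset \bar{X}$) and that $L$ and $D$ are both $h$-independent, which has already been established in Lemma~\ref{lem:lipschitz} and by the fact that $\bar{X}$ is a fixed bounded set. The entire argument is a short two-line corollary.
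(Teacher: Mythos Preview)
Your proposal is correct and matches the paper's own proof essentially line for line: the paper also invokes the normalization $\ulin^h(x_0^h)=0$, applies the Lipschitz bound from Lemma~\ref{lem:lipschitz}, and concludes $\abs{\ulin^h(x)} \leq L\,\mathrm{diam}(X)$. There is nothing to add or correct.
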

\begin{proof}
Since $\ulin^h(x_0^h) = 0$ and $\ulin^h$ has a bounded Lipschitz constant (Lemma~\ref{lem:lipschitz}), we have that
$$
\begin{aligned}
\abs{\ulin^h(x)}  &= \abs{\ulin^h(x)-\ulin^h(x_0)}\\
 &\leq L \abs{x-x_0} \\
 &\leq L \: \text{diam}(X)
\end{aligned}
$$
for every $x\in\bar{X}$.
\end{proof}

Next we adapt the usual Barles-Souganidis convergence proof~\cite{BSnum} to begin to show how we can obtain viscosity solutions to~\eqref{eq:modifiedPDE} from our approximation scheme~\eqref{eq:disc2}.
\begin{lemma}\label{lem:subandsuper} Let $h_n$ be any sequence such that $h_n\to0$ and $\ulin^{h_n}$ converges uniformly to a continuous function $v$. Then $v$ is a viscosity solution of~\eqref{eq:modifiedPDE}.\end{lemma}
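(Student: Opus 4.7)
The plan is to adapt the classical Barles--Souganidis convergence argument to the composite operator in~\eqref{eq:modifiedPDE}, exploiting three ingredients already in place: consistency and monotonicity of each constituent scheme $F^h$, $L^h$, $H^h$, $E^h$ (hence of the combined operator $G^h$ from~\eqref{eq:uhInt}); uniform convergence $\ulin^{h_n} \to v$ on $\bar{X}$; and eigenvalue convergence $c^{h_n} \to \cex$ from Theorem~\ref{thm:convEig}. Since the PDE~\eqref{eq:modifiedPDE} is posed on the open set $X$, I need only verify the sub- and supersolution properties at interior points $x_0 \in X$, and the inequality~\eqref{eq:uhBdy} on $\partial X$ plays no role in this lemma.

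For the subsolution property, I will fix $\phi \in C^2$ and $x_0 \in X$ such that $v - \phi$ has a local maximum at $x_0$. Replacing $\phi$ by $\phi(x) + \epsilon|x-x_0|^2$ and sending $\epsilon \to 0$ at the end (using continuity of $G$ in $(p,M)$), I may assume the maximum is strict on some closed ball $\bar{B}(x_0,r_0) \subset X$. For each $n$, I will select
\begin{align*}
x_n \in \argmax_{x \in \G^{h_n}\cap \bar{B}(x_0,r_0)} \bigl(\ulin^{h_n}(x) - \phi(x)\bigr).
\end{align*}
Uniform convergence $\ulin^{h_n} \to v$, together with density of grid points~\eqref{eq:resolution}, forces any subsequential limit of $\{x_n\}$ to achieve $\sup_{\bar{B}(x_0,r_0)}(v - \phi)$; strict maximality then gives $x_n \to x_0$. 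For large $n$, since $X$ is open, $x_n \in \G^{h_n} \cap X$, so the interior scheme~\eqref{eq:uhInt} gives $G^{h_n}(x_n, u^{h_n}(x_n) - u^{h_n}(\cdot)) = 0$. Because the stencil radius $r = \bO(\sqrt{h})$ vanishes, for $n$ large the entire stencil at $x_n$ lies in $\bar{B}(x_0,r_0)$, and the discrete max property yields
\begin{align*}
u^{h_n}(x_n) - u^{h_n}(y) \geq \phi(x_n) - \phi(y)
\end{align*}
at every stencil neighbor $y$. Monotonicity of $G^{h_n}$ in its difference arguments then produces
\begin{align*}
0 = G^{h_n}(x_n, u^{h_n}(x_n)-u^{h_n}(\cdot)) \geq G^{h_n}(x_n, \phi(x_n)-\phi(\cdot)),
\end{align*}
and applying consistency componentwise, together with $c^{h_n} \to \cex$, passes the right-hand side to $\max\{F(D^2\phi(x_0))+\cex,\,-\lambda_1(D^2\phi(x_0)),\,H(\nabla\phi(x_0)),\,|\nabla\phi(x_0)|-R\} \leq 0$, which is precisely the required subsolution inequality.

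The supersolution direction is symmetric: I will take $x_n$ to be a grid-point minimizer of $\ulin^{h_n} - \phi$ on $\bar{B}(x_0,r_0)$, obtain the reversed stencil inequality $u^{h_n}(x_n)-u^{h_n}(y) \leq \phi(x_n)-\phi(y)$, invoke monotonicity to conclude $0 \leq G^{h_n}(x_n,\phi(x_n)-\phi(\cdot))$, and pass to the limit via consistency. I expect the principal technical obstacle to be justifying $x_n \to x_0$ with $x_n \in \G^{h_n} \cap X$ for large $n$; the strict-maximum perturbation combined with uniform convergence handles the first, and openness of $X$ handles the second. A minor convenience worth noting is that because $G^h$ is the pointwise maximum of monotone components, it inherits monotonicity in the difference variables without further work, so a single comparison at $x_n$ suffices despite the compound structure of~\eqref{eq:modifiedPDE}.
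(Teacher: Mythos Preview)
Your proposal is correct and follows essentially the same Barles--Souganidis argument as the paper's proof. You are in fact somewhat more careful than the paper in localizing the grid maximizer to a ball contained in $X$, explicitly invoking the eigenvalue convergence $c^{h_n}\to\cex$ needed for consistency of $G^{h_n}$ with the limiting operator, and handling the strict-maximum reduction, but these are standard refinements of the same template.
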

\begin{proof}
We first demonstrate that $v$ is a viscosity subsolution.

Consider any $x_0\in X$ and $\phi\in C^2$ such that $v-\phi$ has a strict local maximum at $x_0$ with $v(x_0) = \phi(x_0)$.  
Define by $z_n\in\G^h$ a maximizer of $\ulin^{h_n}-\phi$ over the grid,
\[\ulin^{h_n}(z_n)-\phi(z_n) \geq \ulin^{h_n}(x)-\phi(x), \quad x \in \G^{h_n}.\]
Because $\ulin^h$ and the limit function $v$ are uniformly Lipschitz continuous, strict maxima are stable and we have
\[ z_n\to x_0,  \quad \ulin^{h_n}(z_n)\to v(x_0). \]

%We note that from Lemma~\ref{lem:lipschitz}, we have that
%\[ \abs{\ulin^{h_n}(y_n)-\ulin^{h_n}(z_n)} \leq L\abs{y_n-z_n} < Lh. \]

From the definition of $z_n$ as a maximizer of $\ulin^{h_n}-\phi$, we also observe that
\[ \ulin^{h_n}(z_n)-\ulin^{h_n}(\cdot) \geq \phi(z_n)-\phi(\cdot). \]

Let $G(\nabla u(x),D^2u(x))$ denote the PDE operator~\eqref{eq:modifiedPDE} and $G^h(x,u(x)-u(\cdot))$ the scheme~\eqref{eq:uhInt} at interior points $x\in\G^h\cap X$.  Since $\ulin^{h_n}$ is a solution of the scheme, we can use monotonicity to calculate
\[ 0 = G^{h_n}(z_n,\ulin^{h_n}(z_n)-\ulin^{h_n}(\cdot)) \geq G^{h_n}(z_n,\phi(z_n)-\phi(\cdot)). \]

As the scheme is consistent, we conclude that
\[ 0 \geq \lim\limits_{n\to\infty}G^{h_n}(z_n,\phi(z_n)-\phi(\cdot)) = G(x_0,\nabla\phi(x_0),D^2\phi(x_0)). \]
Thus $v$ is a subsolution of~\eqref{eq:modifiedPDE}. 

An identical argument shows that $v$ is a supersolution and therefore a viscosity solution.
\end{proof}

With these lemmas in place, we can now complete the main convergence result.
\begin{proof}[Proof of Theorem~\ref{thm:convSol}]
Let $h_n$ be any sequence converging to $0$.  Since $\ulin^{h_n}$ is uniformly bounded and Lipschitz continuous (Lemmas~\ref{lem:lipschitz}-\ref{lem:uniformbounduh}), we can apply the Arzela-Ascoli theorem to obtain a subsequence $h_{n_k}$ such that
$u^{h_{n_k}} \to v$
uniformly for some continuous function $v$.

By Lemma~\ref{lem:subandsuper}, $v$ is a viscosity solution of~\eqref{eq:modifiedPDE} and therefore a classical solution of the eigenvalue problem~\eqref{eq:LagrangianPDE}-\eqref{eq:BVP2}.  Moreover, since convergence is uniform and $u^{h_{n_k}}$ continuous we have that
\[ v(x_0) = \lim\limits_{k\to\infty}\ulin^{h_{n_k}}(x_0^{h_{n_k}}) = 0. \]
Thus $v=\uex$ is the unique solution of~\eqref{eq:LagrangianPDE}-\eqref{eq:BVP2} satisfying $v(x_0) = 0$.

Since every sequence $\ulin^{h_n}$ has a subsequence converging to $\uex$, we conclude that $\ulin^h$ converges to $\uex$.
\end{proof}

\section{Computational Results}\label{sec:results}

We now present some numerical results to illustrate the effectiveness of our methods.  

These computations require solving the nonlinear algebraic system~\eqref{eq:disc1} \[G^h[v^h;c^h] = 0\] for the unknowns $v^h$ and $c^h$.  While the system is not differentiable, the non-smoothness occurs in a simple form through the max function. These systems can be solved using a nonsmooth version of Newton's method~\cite{Qi_nonsmoothNewton}, which involves the iteration
\[ \left(\begin{matrix} v_{k+1}\\c_{k+1}\end{matrix}\right) = \left(\begin{matrix} v_{k}\\c_{k}\end{matrix}\right) - V_k^{-1}G^h[v_k;c_k] \]
where $V_k\in\partial G^h[v_k;c_k]$ is an element of the generalized Jacobian of $G^h$.  Given the simple form of the non-smoothness in this problem, appropriate elements of the generalized Jacobian are easily computed via Danskin's Theorem~\cite{Bertsekas}.

We note that the solution $v^h$ we obtained here always satisfied~\eqref{eq:disc2} so we have never found it necessary to solve this second system in practice.

\subsection{Affine surface}
We begin with an example where the minimal Lagrangian surface $\nabla u$ is affine, which allows us to exactly determine the error in our computed results.

Let $B$ be the unit circle. Then the domain ellipse is given by $X = M_x B$ and the target skew ellipse is given by $Y = M_y B$, where 
$$M_x = \begin{bmatrix} 2 & 0\\
0 & 1
\end{bmatrix}$$
and 
$$M_y = \begin{bmatrix} 1.5 & .5\\
.5 & 2
\end{bmatrix}$$

 In $\R^2$ the optimal map can be found explicitly to be $$\nabla u(x) = M_y R_\theta M_{x}^{-1}x$$ where $R_\theta$ is the rotation matrix
$$R_\theta = \begin{bmatrix} \cos{(\theta)} & -\sin{(\theta)}\\
\sin{(\theta)} & \cos{(\theta)}
\end{bmatrix}$$
and the angle is given by 
$$\theta = \itan{(\text{trace}(M_{x}^{-1}M_{y}^{-1}J)/\text{trace}(M_{x}^{-1}M_{y}^{-1}))}$$
where $$J = R_{\pi/2} = \begin{bmatrix} 0 & -1\\
1 & 0
\end{bmatrix}$$
The map and the convergence data are in Figure~\ref{ESEmap} and Table~\ref{ESEdata}.  In this example, we actually observe linear convergence, which is higher than the formal discretization error of our method.

\begin{table}[ht]
\begin{center}
%\begin{tabular}{llll}
%$h$          & $\|u^h-\uex\|_\infty$       & Ratio   & Observed Order \\
%   0.26250&   1.30436e-01    &      &           \\
%   0.13125 &  5.70280e-02   &2.28723   &      1.19360 \\
%   0.06563  & 2.69079e-02   &2.11938     &    1.08364 \\
%   0.03281   &1.42328e-02  & 1.89055       &  0.91881 \\
%   0.01641  & 6.76756e-03 &  2.10309       &  1.07251
%\end{tabular}
\begin{tabular}{cccc}
$h$          & $\|u^h-\uex\|_\infty$       & Ratio   & Observed Order \\
\hline
   $2.625\times 10^{-1}$&   $1.304 \times 10^{-1}$    &      &           \\
   $1.313\times 10^{-1}$ &  $5.703\times 10^{-2}$   &2.287   &      1.194 \\
   $6.563\times 10^{-2}$  & $2.691\times 10^{-2}$   &2.119     &    1.084 \\
   $3.281\times 10^{-2}$   &$1.423\times 10^{-2}$  & 1.891       &  0.919 \\
   $1.641\times 10^{-2}$  & $6.768\times 10^{-3}$ &  2.103       &  1.072
\end{tabular}
\end{center}
\caption{Error in mapping an ellipse to an ellipse.}
\label{ESEdata}
\end{table}

\begin{figure}[!h]
		\begin{center}
\includegraphics[width=5.715cm,height=54.5mm]{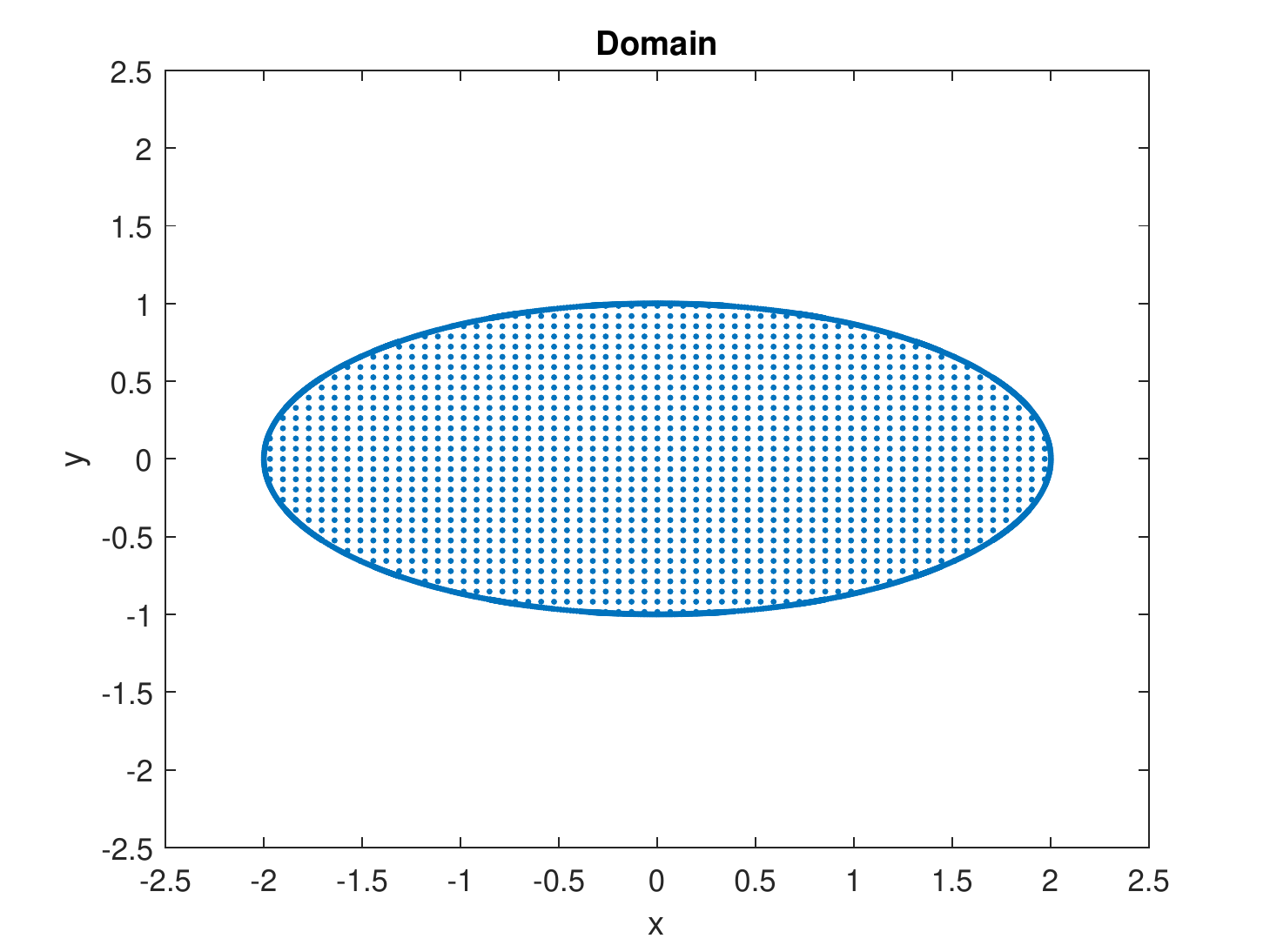}\includegraphics[width=5.715cm,height=54.5mm]{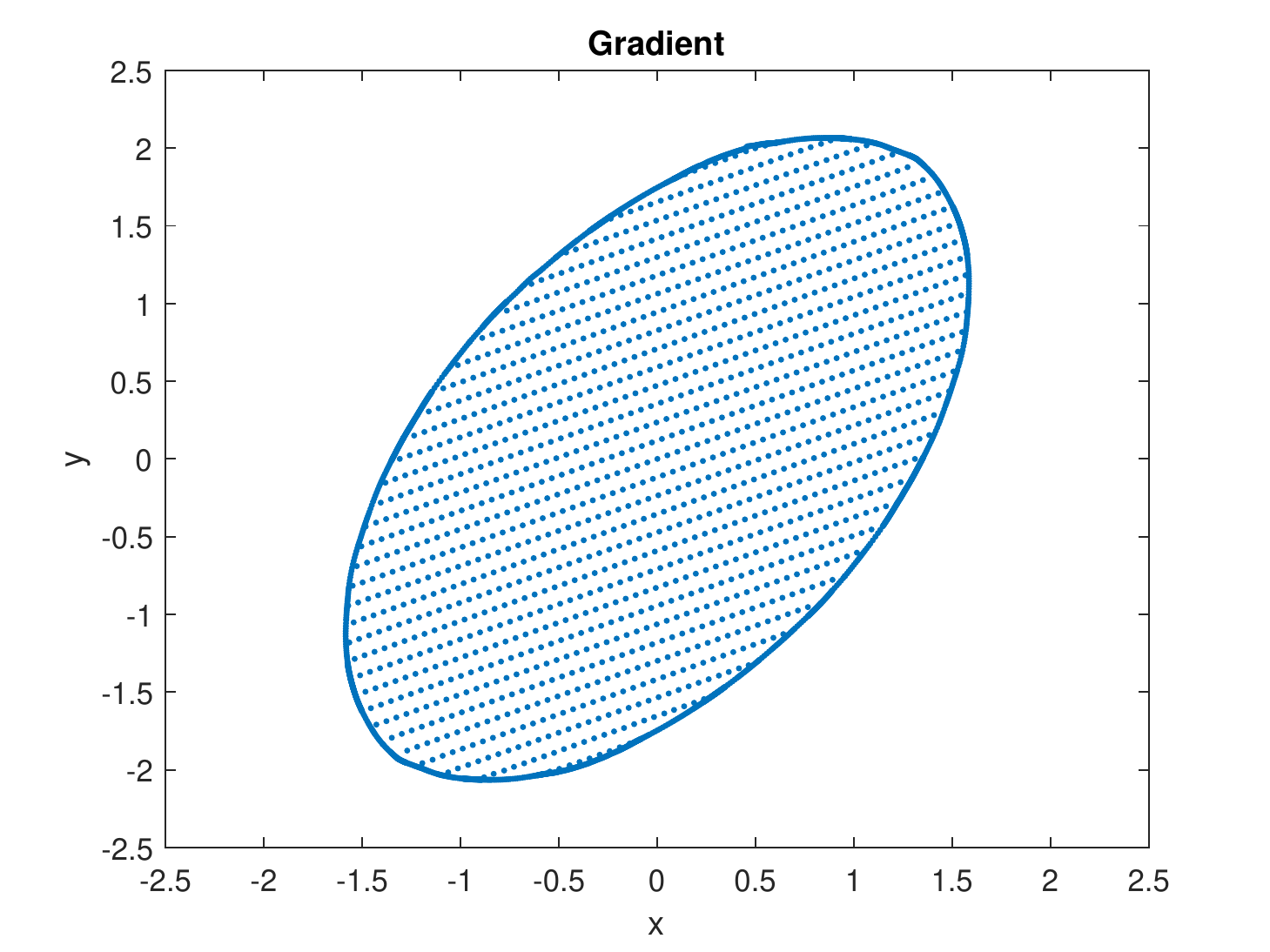}
		\end{center}
\caption{Domain and computed target ellipse.}
\label{ESEmap}
\end{figure}

\subsection{Varying boundary conditions}
For most examples, we do not have access to an exact solution.  Nonetheless, we can easily compute the solutions and visually determine if the computed mapping $\nabla u$ appears correct. In the following examples,  we take as our domain the square $X = (-1.1,1.1)\times(-1.1,1.1)$, which is not required to align well with any underlying Cartesian grid in order to challenge our numerical method. We consider the solution of~\eqref{eq:LagrangianPDE}-\eqref{eq:BVP2} for a variety of convex (though not necessarily uniformly convex) target sets $Y$. These include a bowl shape, an ice cream cone, a pentagon, and a circle. The computed maps are pictured in Figure~\ref{fig:shapes} and do effectively recover the required geometries.

\begin{figure}[ht]
\centering
\subfigure[]{
\includegraphics[width=5.715cm,height=54.5mm]{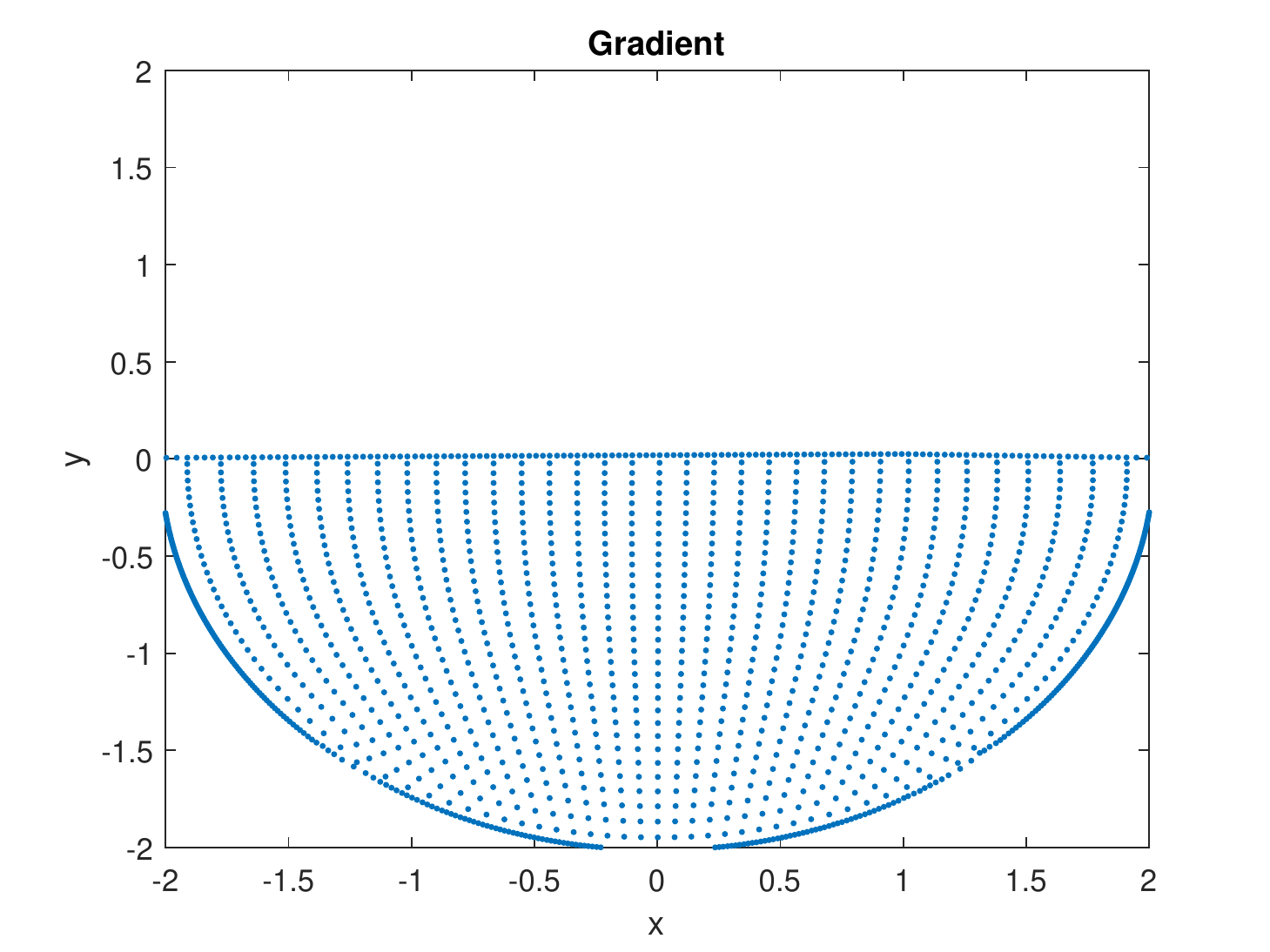}\label{fig:quad1}}
\subfigure[]{
\includegraphics[width=5.715cm,height=54.5mm]{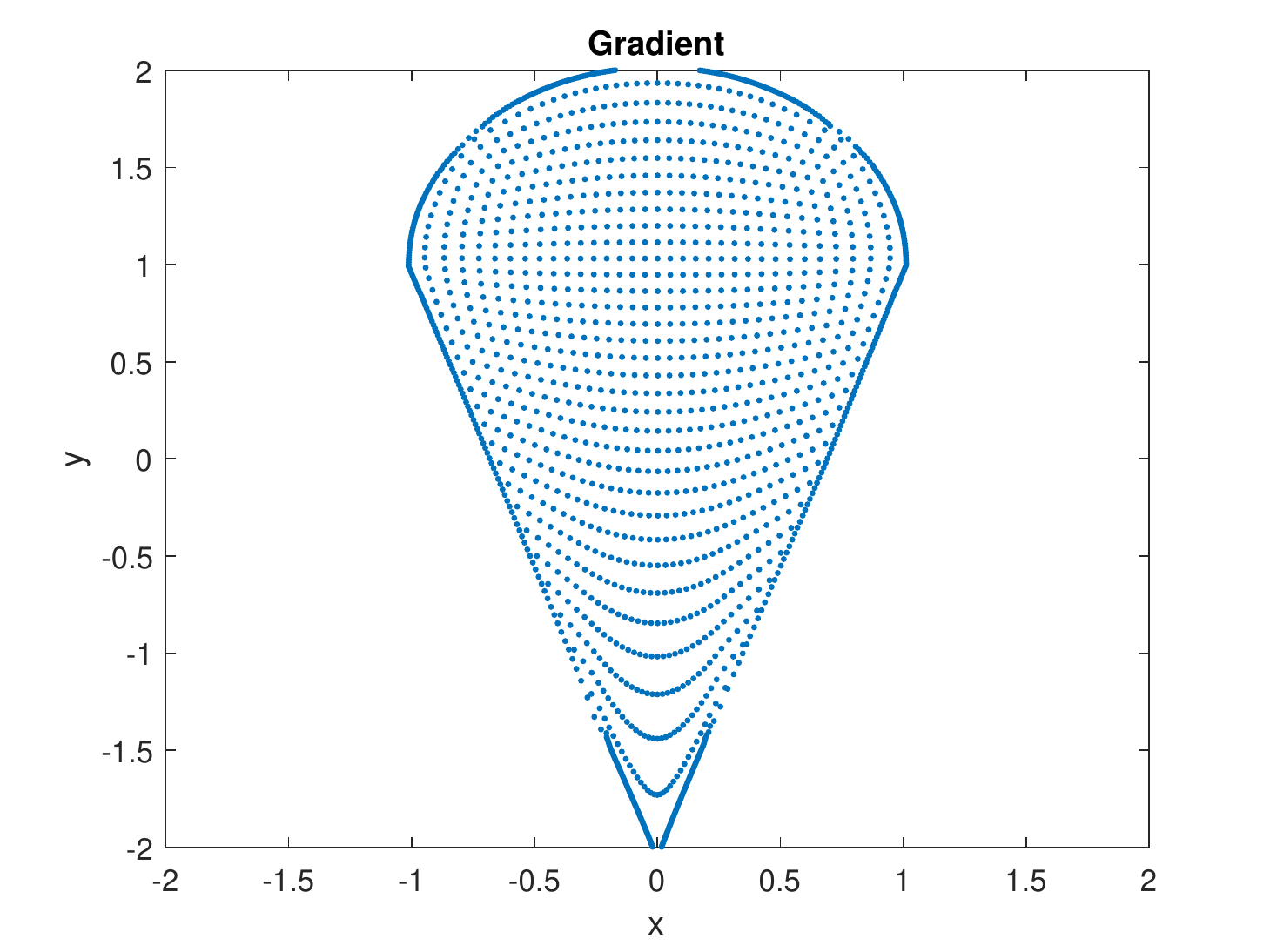}\label{fig:quad2}}
\subfigure[]{
\includegraphics[width=5.715cm,height=54.5mm]{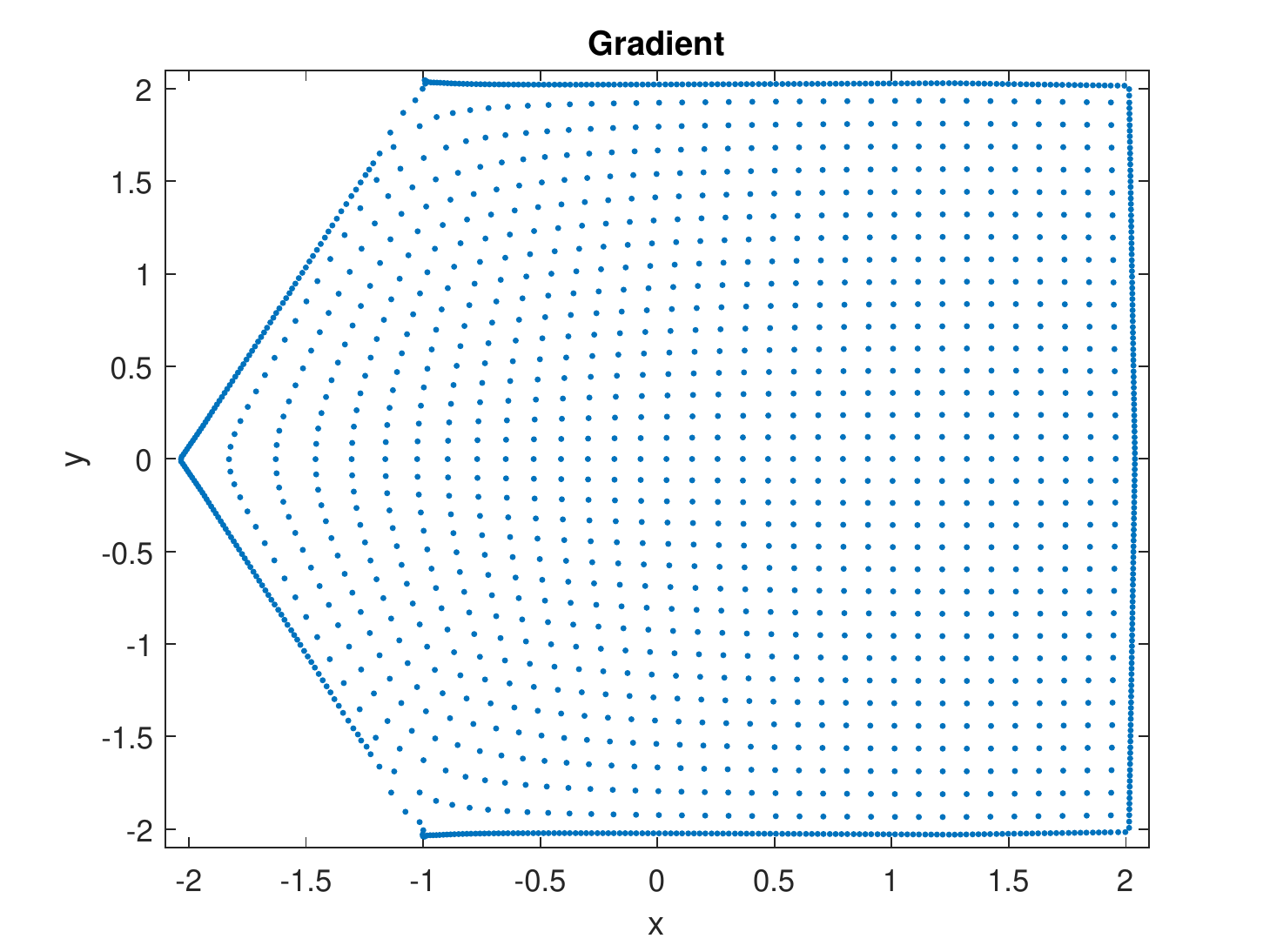}\label{fig:quad1}}
\subfigure[]{
\includegraphics[width=5.715cm,height=54.5mm]{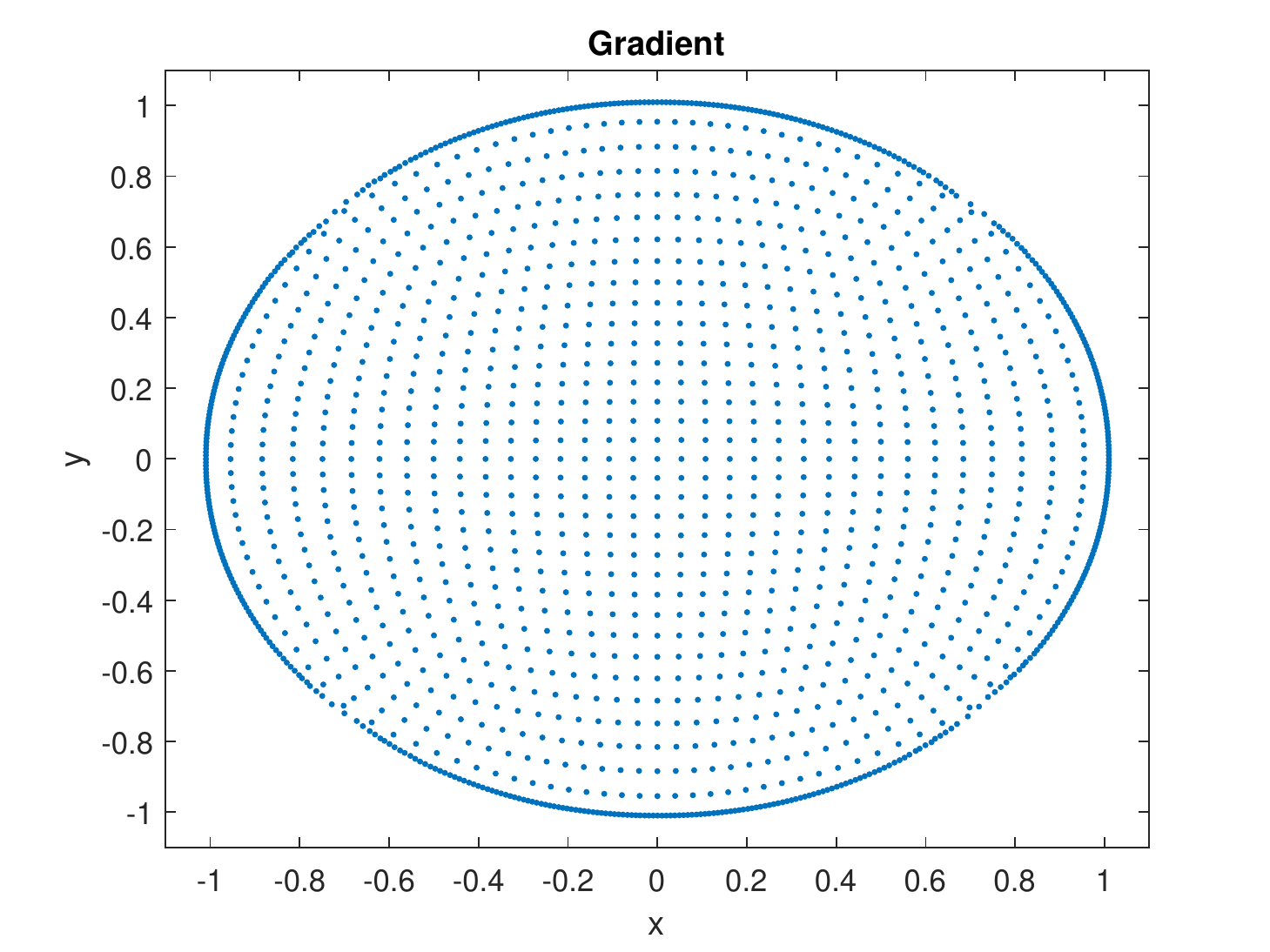}\label{fig:quad2}}
\caption{Computed maps from a square $X$ to various targets $Y$.}\label{fig:shapes}
\end{figure}

Additionally, we consider the case where the domain $X$ is the unit circle and the desired target $Y=(-1.1,1.1)\times(-1.1,1.1)$ is a square.  The computed map is shown in Figure~\ref{CtSfig}, and again achieves the required geometry.

\begin{figure}[!h]
		\begin{center}
\includegraphics[width=5.715cm,height=54.5mm]{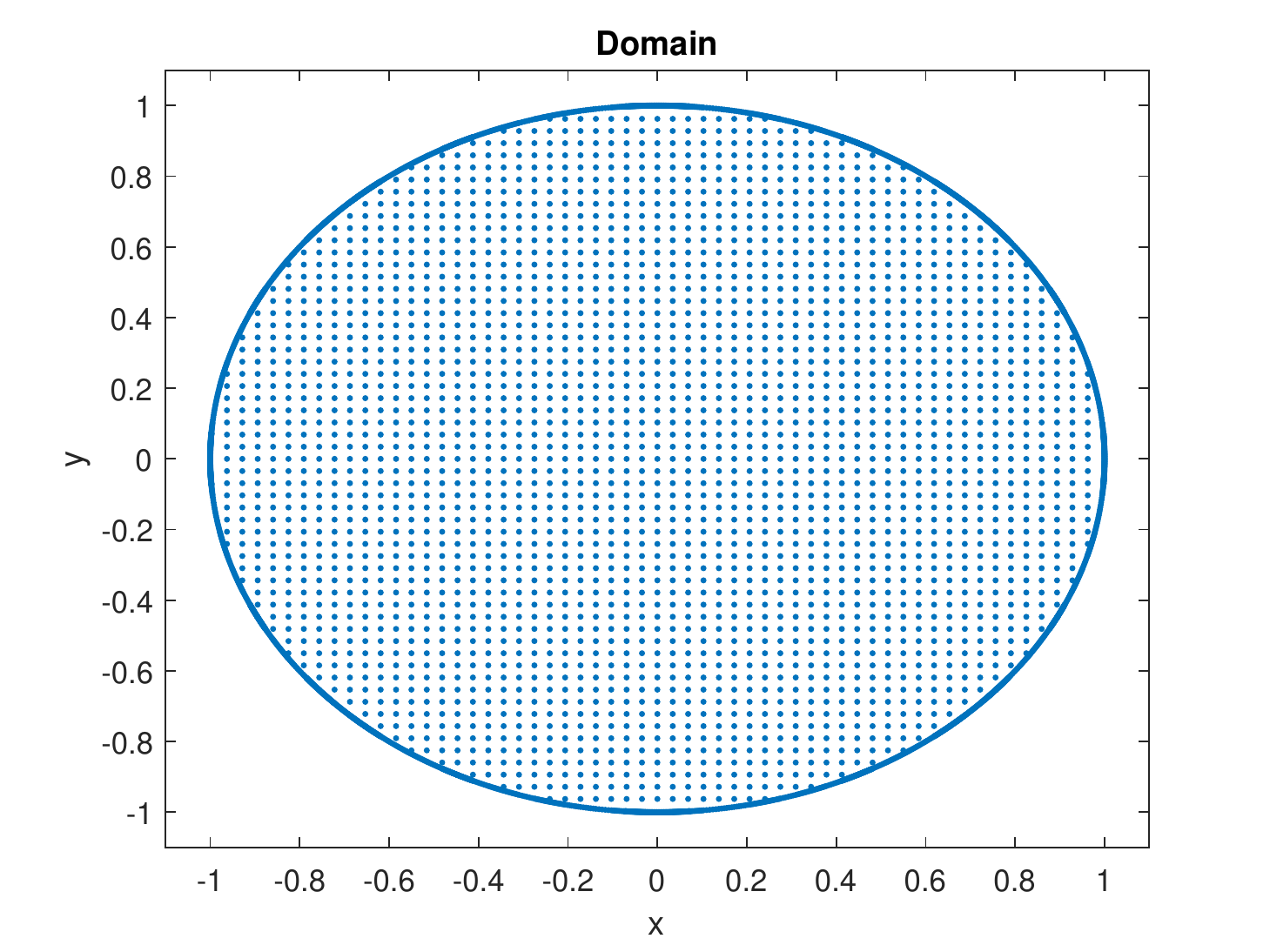}\includegraphics[width=5.715cm,height=54.5mm]{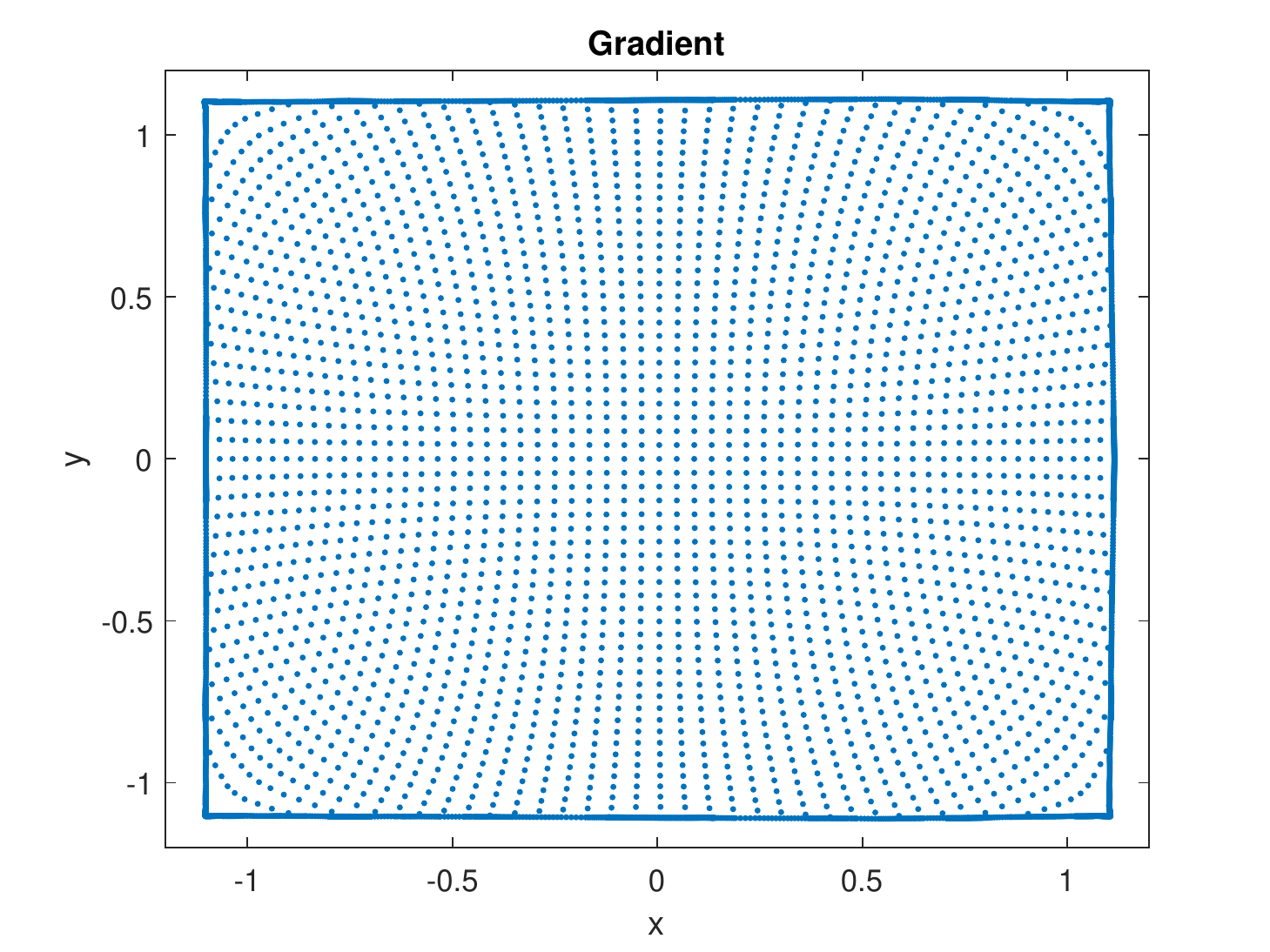}
		\end{center}
\caption{Circular domain $X$ and square target $Y$.}
\label{CtSfig}
\end{figure}

\subsection{Degenerate example}
We also tested our method on the highly degenerate example of a circle $X$ mapped to a line segment $Y = \{0\}\times(-1,1)$. The exact solution for this is $u(x,y) = \frac{x^2}{2}$. This example leads to a highly degenerate PDE that falls outside the purview of our convergence proof.  Nevertheless, our method computes the solution without difficulty, and we again observe $\bO(h)$ convergence.  The error is presented in Table~\ref{degenerateerror} and the computed map is pictured in Figure~\ref{CtL}.

\begin{table}[ht]
\begin{center}
%\begin{tabular}{llll}
%$h$ & $\|u^h-\uex\|_\infty$ & Ratio & Observed order \\
%0.13750 & 9.13194e-02 &  &  \\
%0.06875 & 3.81154e-02 & 2.39587 & 1.26055 \\
%0.03438 & 1.93581e-02 & 1.96896 & 0.97743 \\
%0.01719 & 1.08151e-02 & 1.78991 & 0.83989 \\
%0.00859 & 4.63672e-03 & 2.33250 & 1.22188
%\end{tabular}
\begin{tabular}{cccc}
$h$ & $\|u^h-\uex\|_\infty$ & Ratio & Observed order \\ \hline
$1.375\times 10^{-1}$ & $9.132 \times 10^{-2}$ &  &  \\
$6.875\times 10^{-2}$ & $3.812 \times 10^{-2}$ & 2.396 & 1.261 \\
$3.438\times 10^{-2}$ & $1.936\times 10^{-2}$ & 1.969 & 0.978 \\
$1.719\times 10^{-2}$ & $1.082\times 10^{-2}$ & 1.790 & 0.840 \\
$8.59\times 10^{-3}$ & $4.636\times 10^{-3}$ & 2.333 & 1.222
\end{tabular}
\end{center}
\caption{Error in mapping a circle to a line segment.}
\label{degenerateerror}
\end{table}

\begin{figure}[ht]
		\begin{center}
\includegraphics[width=5.715cm,height=54.5mm]{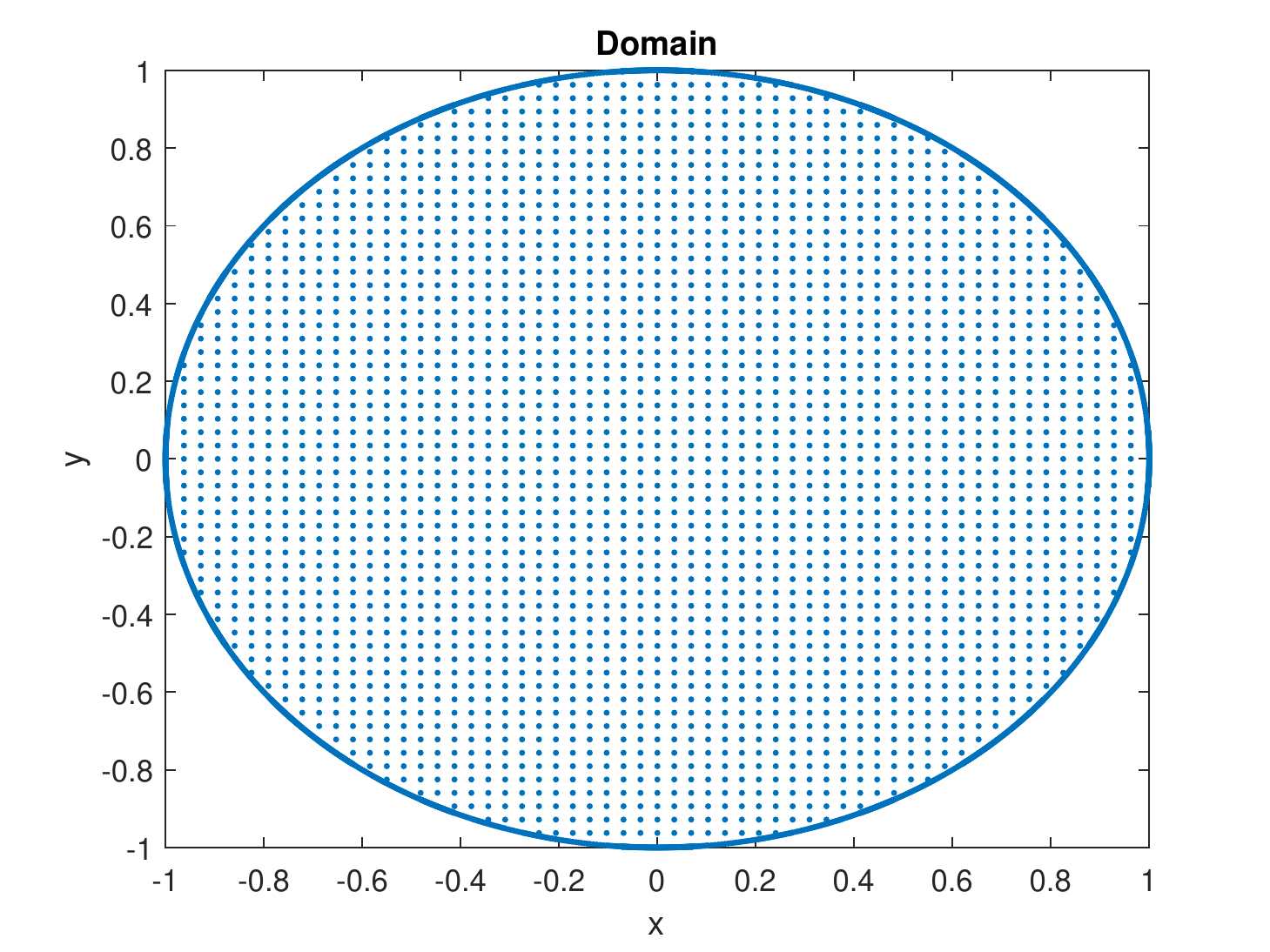}\includegraphics
[width=5.715cm,height=54.5mm]{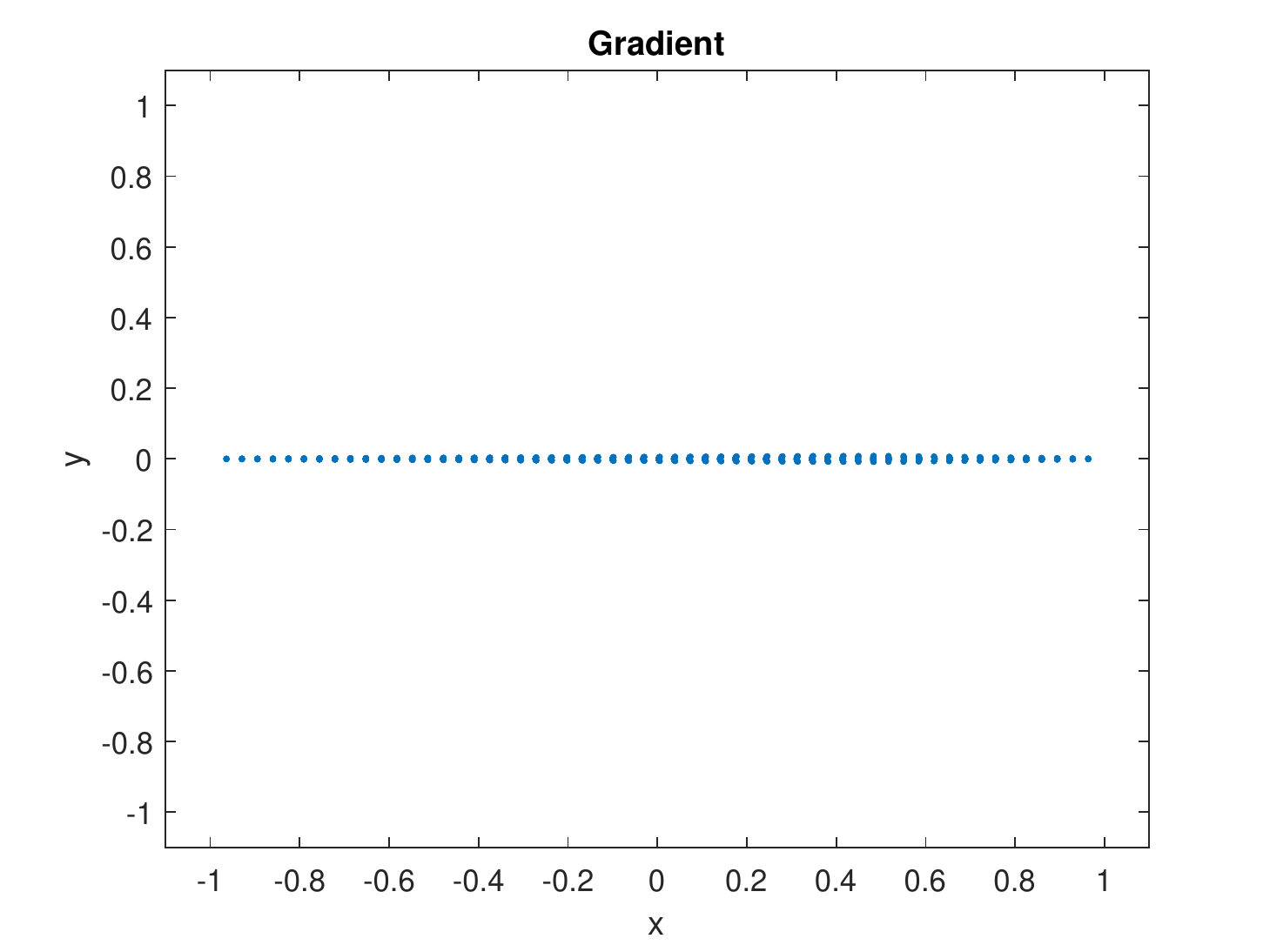}
		\end{center}
\caption{Circular domain $X$ and degenerate target $Y$.}
\label{CtL}
\end{figure}

\section{Conclusion}\label{sec:conclusion}
In this paper, we considered the numerical construction of minimal Lagrangian graphs.  Following~\cite{brendle2010}, we can interpret this as an eigenvalue problem for a fully nonlinear elliptic PDE with a traditional boundary condition replaced by the second type boundary condition.  

To date, the literature has produced very little in the way of numerical analysis for this type of nonlinear eigenvalue problem.  We introduced a numerical framework for solving this problem, which could be easily adapted to more general eigenvalue problems.  This includes a very promising approach for solving PDEs where noisy data fails to exactly solve a required solvability condition or where the discrete solvability condition differs slightly from the solvability condition for the original continuous problem.

We used the monotonicity of our method to demonstrate convergence of the eigenvalue.  By introducing a strong form of stability into our method, we were able to modify the Barles and Souganidis convergence proof to obtain a proof of uniform convergence of our computed solution.  A range of challenging computational examples illustrated the effectiveness of our approach.

\bibliographystyle{plain}
\bibliography{PaperBibliography}

\begin{thebibliography}{10}

\bibitem{BSnum}
G.~Barles and P.~E. Souganidis.
\newblock Convergence of approximation schemes for fully nonlinear second order
  equations.
\newblock {\em Asym. Anal.}, 4(3):271--283, 1991.

\bibitem{Bates}
P.~W. Bates, G.-W. Wei, and S.~Zhao.
\newblock Minimal molecular surfaces and their applications.
\newblock {\em J. Comp. Chem.}, 29(3):380--391, 2008.

\bibitem{Benamou:2014:NSO:2574571.2574621}
J.-D. Benamou, B.~D. Froese, and A.~M. Oberman.
\newblock Numerical solution of the optimal transportation problem using the
  {M}onge-{A}mp\`{e}re equation.
\newblock {\em J. Comput. Phys.}, 260:107--126, March 2014.

\bibitem{BFOnumSolSBVP}
J.-D. Benamou, A.~Oberman, and B.~Froese.
\newblock Numerical solution of the second boundary value problem for the
  elliptic {M}onge-{A}mp\`ere equation.
\newblock 06 2012.

\bibitem{Bertsekas}
D.~P. Bertsekas.
\newblock {\em Convex analysis and optimization}.
\newblock Athena Scientific, Belmont, MA, 2003.
\newblock With Angelia Nedi\'c and Asuman E. Ozdaglar.

\bibitem{brendle2010}
S.~Brendle and M.~Warren.
\newblock A boundary value problem for minimal {L}agrangian graphs.
\newblock {\em J. Diff. Geom.}, 84(2):267--287, 02 2010.

\bibitem{BrennerNeilanMA2D}
S.~C. Brenner, T.~Gudi, M.~Neilan, and L.-Y. Sung.
\newblock ${C}^0$ penalty methods for the fully nonlinear {M}onge-{A}mp\'{e}re
  equation.
\newblock {\em Math. Comp.}, 80(276):1979--1995, 2011.

\bibitem{BuddMeshGen}
C.~Budd and J.~Williams.
\newblock Moving mesh generation using the parabolic {M}onge–{A}mp\`ere
  equation.
\newblock {\em SIAM J. Sci. Comput.}, 31(5):3438--3465, 2009.

\bibitem{CIL}
M.~G. Crandall, H.~Ishii, and P.-L. Lions.
\newblock User's guide to viscosity solutions of second order partial
  differential equations.
\newblock {\em Bull. Amer. Math. Soc. (N.S.)}, 27(1):1--67, 1992.

\bibitem{DGnum2006}
E.~J. Dean and R.~Glowinski.
\newblock Numerical methods for fully nonlinear elliptic equations of the
  {M}onge-{A}mp\`ere type.
\newblock {\em Comput. Meth. Appl. Mech. Engrg.}, 195(13-16):1344--1386, 2006.

\bibitem{Delanoe}
P.~Delano{\"e}.
\newblock Classical solvability in dimension two of the second boundary-value
  problem associated with the {M}onge-{A}mpere operator.
\newblock In {\em Ann. Inst. Hen. Poin. Non Lin. Anal.}, volume~8, pages
  443--457. Elsevier, 1991.

\bibitem{EF_FWI}
B.~{Engquist} and B.~D. {Froese}.
\newblock {Application of the Wasserstein metric to seismic signals}.
\newblock {\em Comm. Math. Sci.}, 12(5):979--988, 2014.

\bibitem{FengNeilan}
X.~Feng and M.~Neilan.
\newblock Vanishing moment method and moment solutions for fully nonlinear
  second order partial differential equations.
\newblock {\em SIAM J. Sci. Comput.}, 38(1):74--98, 2009.

\bibitem{FroeseTransport}
B.~D. Froese.
\newblock A numerical method for the elliptic {M}onge-{A}mp\`ere equation with
  transport boundary conditions.
\newblock {\em SIAM J. Sci. Comput.}, 34(3):A1432--A1459, 2012.

\bibitem{FroeseMeshfreeEigs}
B.~D. Froese.
\newblock Meshfree finite difference approximations for functions of the
  eigenvalues of the {Hessian}.
\newblock {\em Numer. Math.}, 138(1):75--99, 2018.

\bibitem{Haker}
S.~Haker, L.~Zhu, A.~Tannenbaum, and S.~Angenent.
\newblock Optimal mass transport for registration and warping.
\newblock {\em Int. J. Comp. Vis.}, 60(3):225--240, 2004.

\bibitem{Hamfeldt_Gauss}
B.~Hamfeldt.
\newblock Convergent approximation of non-continuous surfaces of prescribed
  {G}aussian curvature.
\newblock {\em Comm. Pure Appl. Anal.}, 17(2):671--707, 2018.

\bibitem{HamfeldtBVP2}
B.~Hamfeldt.
\newblock Convergence framework for the second boundary value problem for the
  {M}onge-{A}mp\`ere equation.
\newblock {\em SIAM J. Numer. Anal.}, 57(2):945--971, 2019.

\bibitem{HS_Quadtree}
B.~F. Hamfeldt and T.~Salvador.
\newblock Higher-order adaptive finite difference methods for fully nonlinear
  elliptic equations.
\newblock {\em SIAM J. Sci. Comput.}, 75(3):1282--1306, June 2018.

\bibitem{HarveyLawson}
R.~Harvey and H.~B. Lawson.
\newblock Calibrated geometries.
\newblock {\em Act. Math.}, 148(1):47--157, 1982.

\bibitem{JensenMax}
R.~Jensen.
\newblock The maximum principle for viscosity solutions of fully nonlinear
  second order partial differential equations.
\newblock {\em Arch. Rat. Mech. Anal.}, 101(1):1–27, 1988.

\bibitem{Kao_LF}
C.~Y. Kao, S.~Osher, and J.~Qian.
\newblock {L}ax--{F}riedrichs sweeping scheme for static {H}amilton--{J}acobi
  equations.
\newblock {\em Journal of Computational physics}, 196(1):367--391, 2004.

\bibitem{LeVeque:2007:FDM:1355322}
R.~LeVeque.
\newblock {\em Finite Difference Methods for Ordinary and Partial Differential
  Equations: Steady-State and Time-Dependent Problems (Classics in Applied
  Mathematics Classics in Applied Mathemat)}.
\newblock SIAM, Philadelphia, PA, USA, 2007.

\bibitem{lian2020boundary}
Y.~Lian and K.~Zhang.
\newblock Boundary {L}ipschitz regularity and the {H}opf lemma for fully
  nonlinear elliptic equations, 2020.

\bibitem{ObermanCE}
A.~Oberman.
\newblock The convex envelope is the solution of a nonlinear obstacle problem.
\newblock {\em Proc. Amer. Math. Soc.}, 135(6):1689--1694, 2007.

\bibitem{ObermanEP}
A.~M. Oberman.
\newblock Convergent difference schemes for degenerate elliptic and parabolic
  equations: {H}amilton--{J}acobi equations and free boundary problems.
\newblock {\em SIAM J. Numer. Anal.}, 44(2):879--895, 2006.

\bibitem{ObermanEigenvalues}
A.~M. Oberman.
\newblock Wide stencil finite difference schemes for the elliptic
  {M}onge-{A}mp\`ere equation and functions of the eigenvalues of the
  {H}essian.
\newblock {\em Disc. Cont. Dynam. Syst. Ser. B}, 10(1):221--238, 2008.

\bibitem{Prins_BVP2}
C.~R. Prins, R.~Beltman, J.~H.~M. ten Thije~Boonkkamp, W.~L. IJzerman, and
  T.~W. Tukker.
\newblock A least-squares method for optimal transport using the
  {M}onge-{A}mp{\`e}re equation.
\newblock {\em SIAM J. Sci. Comp.}, 37(6):B937--B961, 2015.

\bibitem{Qi_nonsmoothNewton}
L.~Qi and J.~Sun.
\newblock A nonsmooth version of {N}ewton's method.
\newblock {\em Mathematical programming}, 58(1):353--367, 1993.

\bibitem{Qian_sweeping}
J.~Qian, Y.-T. Zhang, and H.-K. Zhao.
\newblock A fast sweeping method for static convex {H}amilton--{J}acobi
  equations.
\newblock {\em Journal of Scientific Computing}, 31(1):237--271, 2007.

\bibitem{Smoczyk}
K.~Smoczyk and M.-T. Wang.
\newblock Mean curvature flows of {L}agrangian submanifolds with convex
  potentials.
\newblock {\em J. Diff. Geom.}, 62(2):243--257, 2002.

\bibitem{Hoffman}
E.~L. Thomas, D.~M. Anderson, C.~S. Henkee, and D.~Hoffman.
\newblock Periodic area-minimizing surfaces in block copolymers.
\newblock {\em Nat.}, 334(6183):598, 1988.

\bibitem{ThomasYau}
R.~P. Thomas and S.-T. Yau.
\newblock Special {L}agrangians, stable bundles and mean curvature flow.
\newblock {\em Comm. Anal. Geom.}, 10(5):1075--1113, 2002.

\bibitem{Urbas_BVP2}
J.~Urbas.
\newblock On the second boundary value problem for equations of
  {M}onge-{A}mpere type.
\newblock {\em J. R. A. M.}, 487:115--124, 1997.

\bibitem{Zhao_sweeping}
H.~Zhao.
\newblock A fast sweeping method for eikonal equations.
\newblock {\em Mathematics of computation}, 74(250):603--627, 2005.

\end{thebibliography}
\end{document}